\documentclass[12pt]{amsart}

\usepackage{amsmath}
\usepackage{cite}


\usepackage{appendix}
\usepackage{kotex}
\usepackage{lineno,hyperref}
\modulolinenumbers[5]
\usepackage{epstopdf}
\usepackage{pifont}
\usepackage{latexsym}
\usepackage{graphics}
\usepackage{graphicx}
\usepackage{float}
\usepackage[dvips]{xy}
\xyoption{all}
\usepackage{ifpdf}
\usepackage{multirow}
\usepackage{amssymb, amsthm}
\usepackage{hhline}
\usepackage{centernot}
\usepackage{verbatim, comment, color}
\usepackage{enumerate}
\usepackage{setspace}
\usepackage{lipsum}
\usepackage{subcaption}

\usepackage{mathtools}



\newtheorem{theorem}{Theorem} 
\newtheorem{proposition}[theorem]{Proposition}
\newtheorem{lemma}[theorem]{Lemma}
\newtheorem{remark}[theorem]{Remark}
\newtheorem{corollary}[theorem]{Corollary}

\newtheorem{example}[theorem]{Example}





\newcommand{\ba}{\begin{align}}
\newcommand{\ea}{\end{align}}  

\newcommand{\be}{\begin{equation}}

\newcommand{\ee}{\end{equation}}
\newcommand{\bea}{\begin{eqnarray}}
\newcommand{\eea}{\end{eqnarray}}
\newcommand{\barr}{\begin{array}}
\newcommand{\earr}{\end{array}}
\newcommand{\bn}{\begin{enumerate}}
\newcommand{\en}{\end{enumerate}}
\newcommand{\bi}{\begin{itemize}}
\newcommand{\ei}{\end{itemize}}
\newcommand{\bbbm}{\begin{pmatrix}}
\newcommand{\eeem}{\end{pmatrix}}

\newcommand{\bbN}{{\bf N}}

\newcommand{\bbS}{{\bf S}}

\newcommand{\cE}{{\cal E}}

\newcommand{\cH}{{\cal H}}

\newcommand{\cP}{{\cal P}}

\newcommand{\R}{{\mathbf R}}

\newcommand{\al}{\alpha}
\newcommand{\bt}{\beta}

\newcommand{\de}{\delta}

\newcommand{\ep}{\epsilon}

\newcommand{\si}{\sigma}

\newcommand{\tta}{\theta}
\newcommand{\ignore}[1]{}{}

\newcommand{\nn}{\nonumber}

\newcommand{\p}{{\partial}}

\newcommand{\q}{\quad}

 \newcommand{\Id}{\mathop{\rm Id}}

\newcommand{\Prob}{{\mathcal P}}
\newcommand{{\QED}}{{\hfill QED} \smallskip}

\newcommand{\Rn}{{\R^n}}
\newcommand{\spt}{\mathop{\rm spt}}

\newcommand{\Wa}{W_\alpha}
\newcommand{\Wb}{{W_\beta}}
\newcommand{\Wab}{W_{\alpha,\beta}}


\renewcommand{\subset}{\subseteq}

\newcommand{\cal}{\mathcal}

 \DeclareMathOperator*{\argmin}{argmin}
  
\DeclareMathOperator*{\Var}{{Var}}

  \definecolor{darkspringgreen}{rgb}{0.09, 0.45, 0.27} 
 \definecolor{darkgray}{rgb}{0.66, 0.66, 0.66}

\numberwithin{equation}{section}
\numberwithin{theorem}{section}
    
%
%

\begin{document}
\title[Classifying minimizers of attractive-repulsive interactions]
{Classifying minimum energy states for interacting particles: 
spherical Shells
 }
\thanks{
\em \copyright 2022 by the authors. The authors thank Dejan Slepcev {and an anonymous referee} for fruitful suggestions, {and Rupert Frank for pointing out prior use of Proposition \ref{P:Lopes Formula}
 in his work \cite{CarrilloDelgadinoDolbeaultFrankHoffman19} with Carrillo et al.}  CD acknowledges the support of a 
Natural Sciences and Engineering Research Council of Canada Undergraduate Research Grant.
 TL is grateful for the support of ShanghaiTech University, and in addition, to the University of Toronto and its Fields Institute for the Mathematical
Sciences, where parts of this work were performed.  RM  acknowledges partial support of his research by the Canada Research Chairs Program and
Natural Sciences and Engineering Research Council of Canada Grant {2020-04162.}}

\date{\today}

\author{Cameron Davies, Tongseok Lim and Robert J. McCann}
\address{Cameron Davies: Department of Mathematics \newline University of Toronto, Toronto ON Canada}
\email{cameron.davies@mail.utoronto.ca}
\address{Tongseok Lim: Krannert School of Management \newline  Purdue University, West Lafayette, Indiana 47907, USA}
\email{lim336@purdue.edu}
\address{Robert J. McCann: Department of Mathematics \newline University of Toronto, Toronto ON Canada}
\email{mccann@math.toronto.edu}

\begin{abstract} 
Particles interacting through long-range attraction and short-range repulsion given by power-laws have been widely used to model physical and biological systems, and to predict or explain many of the patterns they display. Apart from rare values of the attractive and repulsive exponents $(\al,\bt)$, the energy minimizing configurations of particles are not explicitly known,  although simulations and local stability considerations have led to conjectures with strong evidence over a much wider region of parameters. For {dimension $n\ge 2$}, and for a segment $\bt=2<\al<4$ on the mildly repulsive frontier we employ strict convexity to conclude that the energy is uniquely minimized ($d_\infty$-locally, up to translation) by a spherical shell. If $n=1$ and $\beta=2<\alpha-1$, we prove that the spherical shell is (i) the unique global energy minimizer, and (ii) the unique $d_\infty$-local energy minimizer in the class of even, compactly-supported probability measures.
 In a companion work, we show that in the mildly repulsive range $\al>\bt\ge2$, a unimodal threshold $2<\al_{\Delta^n}(\bt) \le \max\{\bt,4\}$ exists such that equidistribution of particles over a unit diameter regular $n$-simplex minimizes the energy if and only if $\al  \ge \al_{\Delta^n}(\bt)$ (and minimizes uniquely up to rigid motions if strict inequality holds). {For $n\ge 2,$} the point $(\al,\bt)=(2,4)$ separates these regimes{. At this point} we show the minimizers all lie on a sphere and are precisely characterized by sharing all first and second moments with the spherical shell. Although the minimizers need not be asymptotically stable, our approach establishes $d_\al$-Lyapunov nonlinear stability of the associated ($d_2$-gradient) aggregation dynamics near the minimizer in both of these adjacent regimes --- without reference to linearization. The $L^\al$-Kantorovich-Rubinstein{-Wasserstein} distance $d_\al$ which quantifies stability is chosen to match the attraction exponent.
\end{abstract}

\maketitle
\noindent\emph{Keywords:  aggregation equation, spherical shell, attractive-repulsive power-law interaction, convex, unique energy minimizer,  Lyapunov stability, 
asymptotic stability, Kantorovich-Rubinstein-Wasserstein distance, infinite-dimensional quadratic programming,
$d_\infty$-local minimum
}

\noindent\emph{MSC2020 Classification: Primary 49Q10. Secondary 31B10, 35Q70, 37L30, 70F45, 90C20} 

\section{Introduction}
 The self-interaction energy of a collection of particles with mass distribution $d\mu(x) \ge 0$ on $\R^n$ is given by 
 \ba \label{energy}
 \mathcal{E}_W(\mu) =  \frac12 \iint_{\R^n \times \R^n} W(x-y) d\mu(x)d\mu(y),
\end{align}
assuming the particles interact with each other through a pair potential $W(x)$.   Normalizing the distribution to have unit mass ensures that $\mu$ belongs to the space $\Prob(\R^n)$ of Borel
probability measures on $\R^n$. 

Our goal is to identify global energy minimizers
of $\cE_W(\mu)$ on $\Prob(\Rn)$,  for {\em power-law} potentials $W=\Wab$ where
\begin{align} \label{potential}
\Wa(x) &:= |x|^\al/\al  \q {\rm and}
\\ \Wab(x) &:= \Wa(x) - \Wb(x), 
\q  {-n<\bt<\al<\infty.} 
\label{potential2}
\end{align}
When $\bt \ge 2$ the potential is called {\em mildly repulsive} \cite{CarrilloFigalliPatacchini17}. In this paper, we focus on the mild repulsion threshold $\bt=2$  {called the centrifugal line in \cite{LimMcCann21}, since, at least on $\R^2$, the potential $-W_2$ induces the outward force 
which particles rotating uniformly around their common center of mass seem to experience in a corotating reference frame;
see e.g.~\cite{McCann06}.
When $\bt=2$ the energy also acts as a Lyapunov function of the rescaled dynamics of the purely attractive Patlak-Keller-Segel 
model \cite{Patlak53} \cite{KellerSegel70} in self-similar variables around the time of blow-up \cite{SunUminskyBertozzi12}.}
{If $\al \in (2,4)$, we will show that the minimizer is uniquely given (up to translations) by a spherical shell, i.e. the uniform probability measure on a spherical hypersurface of the appropriate radius.   
For $\al>4$ and $\bt \ge 2$,  we build on these results to show in a companion
paper that the minimizer is uniquely given (apart from rotations and translations) by equidistributing its mass over the vertices of a regular $n$-simplex.  
Together, these results resolve some questions left open by Sun, Uminsky and Bertozzi
by showing that the linear stability of selfsimilar blow-up which they found 
for the aggregation dynamics 
in these two regimes can be improved to a nonlinear Lyapunov stability result.
On the other hand, at the threshold exponent separating these two regimes, 
we will show that although all centered convex combinations of the configurations mentioned above remain mimimizers, 
there are many additional minimizers as well: indeed for $(\al,\bt)=(4,2)$ the centered minimizers 
consists of those measures supported on the minimizing spherical shell which share its moments up to order 2. When $n \ge 2$, this case is distinguished from $\al \ne 4$ by the fact that the Lyapunov stable set formed by global energy minimizers becomes infinite-dimensional.

To understand the literature surrounding these questions,  we recall that heuristically, the aggregation equation
\be\label{aggregation}
\frac{\p \mu}{\p t} = \nabla \cdot (\mu \nabla (W*\mu))
\ee
arises as the $d_2$-gradient descent of the energy \eqref{energy} with respect to the Kantorovich-Rubinstein-Wasserstein metric
\be\label{KRW metric}
d_p(\mu,\nu) 
:=\inf_{X \sim \mu, Y \sim \nu} \| X - Y\|_{L^p},
\end{equation}
defined for $p\in [1,\infty]$ on probability measures $\mu,\nu \in \Prob(\R^n)$.  
Here $X \sim \mu$ denotes a random vector in $\Rn$ with law $\mu$, and the infimum is over all pairs of random vectors with {fixed laws $\mu$ and $\nu$ (respectively)}.
In the mildly repulsive regime,  $W_{\al,\bt}$ is semiconvex and this heuristic inspired by  \cite{JordanKinderlehrerOtto98} can
 be made rigorous \cite{AmbrosioGigliSavare05} \cite{CarrilloMcCannVillani06} \cite{Villani03}:
 the evolution \eqref{aggregation} is well-posed in the space of probability measures having finite second moments.  
 Under the flow which results,  the energy \eqref{energy} is non-increasing;  we shall show below that the family of global energy
 minimizers forms a $d_\al$-Lyapunov stable family of fixed points of the evolution, where the power $p=\al$ quantifies this stability in terms of the attraction exponent.  
Steady-state examples of discrete particle rings  \cite{BertozziKolokolnikovSunUminskyVonBrecht15} \cite{KolokolnikovSunUminskyBertozzi11} approximating the minimizer show
that the spherical shell will not be asymptotically stable (i.e.~does not form an attractor), 
in spite of the fact  \cite{BalagueCarrilloLaurentRaoul13N} that $d_p$-asymptotic stability 
holds locally in the more restricted class of {\em spherically symmetric initial data for some $p \ge 1$;} 
c.f.~Example~\ref{E:ring}.  
 For $\al>\bt>2$ there are {uncountably} many $d_\infty$-local minima \cite{LimMcCann21} \cite{Simione14} 
--- which we also expect to be asymptotically stable fixed points of the evolution.
Dynamics analogous to \eqref{aggregation} have been proposed as models for the kinetic flocking and swarming behaviour of biological organisms \cite{Breder54}
\cite{MogilnerEdelstein-Keshet99} \cite{TopazBertozziLewis06}, condensation of granular media \cite{BenedettoCagliotiPulvirenti97} \cite{Toscani00} 
\cite{CarrilloMcCannVillani03}, self-assembly of nanomaterials \cite{HolmPutkaradze06},
and even strategies in game theory \cite{BlanchetCarlier14}.  
For this reason, they have often been simulated and a wide variety of patterns have been observed to emerge, depending on $(\al,\bt)$ 
and initial conditions 
\cite{AlbiBalagueCarrilloVonBrecht14} \cite{BertozziKolokolnikovSunUminskyVonBrecht15} \cite{CraigBertozzi16} \cite{KolokolnikovSunUminskyBertozzi11} \cite{vonBrechtUminskyKolokolnikovBertozzi12}.

Despite much attention,  there are relatively few cases in which the global minimum of 
\eqref{energy} over $\cP(\Rn)$ is known explicitly \cite{LimMcCann21},
and many of these either involve additional effects such as diffusion  \cite{CarrilloHittmeirVolzoneYao19} \cite{DelgadinoYanYao20}
or density bounds \cite{BurchardChoksiTopaloglu18} \cite{FrankLieb18} \cite{FrankLieb19+},
or fall outside the mildly repulsive regime \cite{BurchardChoksiHess-Childs20} \cite{CarrilloHuang17} \cite{ChoksiFetecauTopaloglu15}
 \cite{FetecauHuang13} \cite{FetecauHuangKolokolnikov11}.
Several groups of authors have explored how properties of the mimima, such as dimension of its support \cite{BalagueCarrilloLaurentRaoul13} \cite{CarrilloFigalliPatacchini17},
vary with the exponents $(\al,\bt)$.  Others have investigated nonlinear stability of the steady states locally.
Following work in one-dimension by Fellner and Raoul \cite{FellnerRaoul11},
for $\bt >-n$, Balagu\'e, Carrillo, Laurent and Raoul \cite{BalagueCarrilloLaurentRaoul13N} have shown the sign of $\bt-\bt^*$  to determine nonlinear stability ($\bt>\bt^*$) or 
instablity ($\bt<\bt^*$) of
the spherical shell of radius
\be\label{Rab}
R= R_{\al,\bt} = \frac12 \left[\frac{\Gamma(\frac{\bt+n-1}{2}) \Gamma(\frac\al2 + n-1)}{\Gamma({\frac\bt2 + n-1}) \Gamma({\frac{\al+n-1}2})} \right]^{\frac1{\al-\bt}},
\ee
among $d_\infty$-small spherically symmetric perturbations, where 
\[
\bt^* := \frac{(3-n)\al - 10 + 7 n - n^2}{\al +n -3},
\]
{and $\Gamma(\cdot)$ is Euler's Gamma function, \eqref{Euler product}.}
Although  it lies outside the mildly repulsive regime,  for $\bt<\bt^*$, $R_{\al,\bt}$ remains the unique radius \eqref{Rab} at which a spherical shell is a steady state.
 Families of convex combinations of spherical shells form an invariant family under the flow \eqref{aggregation},  on which the dynamics reduces to a system of ordinary differential
equations analyzed by Balagu\'e Guardia, Barbaro, Carrillo and Volkin \cite{BalagueBarbaroCarrilloVolkin20}.
Less has been shown about the dynamics of radial measures with non-singular densities however.
For perturbations which destroy spherical symmetry,  the absence of a spectral gap makes local stability of steady states a much subtler issue.  The asymptotic stability of steady state spherical shells in certain spaces might be bootstrapped from linear stability using the framework of  von Brecht and McCalla \cite{vonBrechtMcCalla14}, while for more general steady states including some supported on the discrete two dimensional rings of Example \ref{E:ring}
$d_\infty$-Lyapunov and $d_2$-asymptotic stability have also been addressed  
by Simione \cite{Simione14}.

\section{Results}

Let us preface our results with a proposition reviewing the existence \cite{ChoksiFetecauTopaloglu15} and some relevant properties of energy minimizers.
\begin{proposition}[Minimizers]
\label{exist} 
For $\al > \bt > 0$, minimizers of $\mathcal{E}_{W_{\al,\bt}}$ on $\cP(\Rn)$ exist and the diameter of their supports is uniformly bounded by $e^{1/\bt}$.
Moreover, each such minimizer $\mu$ satisfies
\be\label{Euler-Lagrange}
\mu[\argmin_{\Rn} (W_{\al,\bt} * \mu)]=1
\ee
\end{proposition}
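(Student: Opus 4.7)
The plan is to treat the three assertions---Euler--Lagrange, diameter bound, existence---in turn, first deriving \eqref{Euler-Lagrange} for any (hypothetical) minimizer by a first-variation argument, then upgrading to the diameter bound via comparison against a Dirac mass, and finally establishing existence by the direct method using the resulting a priori bounds.

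For \eqref{Euler-Lagrange}, let $\mu$ be any minimizer and set $f := W_{\al,\bt}*\mu$ and $m := \inf_{\R^n} f$. Since $W_{\al,\bt}(x)\to+\infty$ as $|x|\to\infty$ and $\mu$ has finite $\al$-moment (from $\mathcal{E}_{W_{\al,\bt}}(\mu) < \infty$), $f$ is continuous and coercive, so $m$ is attained at some $y_0\in\R^n$. Suppose for contradiction $\mu(A_\delta)>0$ for $A_\delta := \{f\ge m+\delta\}$ and some $\delta>0$. Consider the admissible probability measure
\[
\mu_\epsilon := \mu - \epsilon\,\tfrac{\mu|_{A_\delta}}{\mu(A_\delta)} + \epsilon\,\delta_{y_0}
\]
for $\epsilon\in(0,\mu(A_\delta))$. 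Expanding the quadratic form,
\[
\mathcal{E}_{W_{\al,\bt}}(\mu_\epsilon) - \mathcal{E}_{W_{\al,\bt}}(\mu) = \epsilon\Big(f(y_0) - \tfrac{1}{\mu(A_\delta)}\int_{A_\delta} f\,d\mu\Big) + O(\epsilon^2) \le -\epsilon\delta + O(\epsilon^2),
\]
contradicting minimality for small $\epsilon$. Therefore $\mu(\{f>m\})=0$, which is \eqref{Euler-Lagrange}.

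For the diameter bound, testing $\mu$ against $\delta_0$ gives $\mathcal{E}_{W_{\al,\bt}}(\mu)\le \mathcal{E}_{W_{\al,\bt}}(\delta_0) = 0$, so \eqref{Euler-Lagrange} forces $f(x_0) = 2\mathcal{E}_{W_{\al,\bt}}(\mu)\le 0$ for every $x_0\in\supp\mu$, i.e., $\bt A(x_0)\le\al B(x_0)$ where $A(x):=\int|x-y|^\al d\mu(y)$ and $B(x):=\int|x-y|^\bt d\mu(y)$. Combined with the Jensen inequality $B(x_0)^{\al/\bt}\le A(x_0)$ (valid since $t\mapsto t^{\al/\bt}$ is convex and $\mu$ is a probability measure), this gives the uniform integral bound
\[
B(x_0) \le (\al/\bt)^{\bt/(\al-\bt)} \le e
\]
via the elementary inequality $(1+u)^{1/u}\le e$ with $u=(\al-\bt)/\bt$. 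Upgrading this integral bound to the pointwise geometric bound $\diam\supp\mu\le e^{1/\bt}$ requires a further comparison argument, exploiting that $W_{\al,\bt}(r)<0$ on $(0,r_0)$ and $W_{\al,\bt}(r)>0$ on $(r_0,\infty)$ with $r_0:=(\al/\bt)^{1/(\al-\bt)}\le e^{1/\bt}$; this refinement, which we adapt from \cite{ChoksiFetecauTopaloglu15}, is the step I expect to be the main obstacle.

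Existence then follows from the direct method. The functional $\mathcal{E}_{W_{\al,\bt}}$ is narrowly lower semicontinuous on $\cP(\R^n)$ (since $W_{\al,\bt}$ is continuous and bounded below) and bounded below by $-(\al-\bt)/(2\al\bt)$, as one sees by applying Jensen with exponent $\al/\bt$ to relate $I_\bt$ and $I_\al$ in $\mathcal{E} = I_\al/(2\al) - I_\bt/(2\bt)$. Given a minimizing sequence $\mu_k$, an approximate version of the diameter bound (applied to almost-minimizers) combined with the translation invariance of the energy allows us to arrange that the $\supp\mu_k$ lie in a common compact ball; Prokhorov's theorem then extracts a narrowly convergent subsequence whose limit attains the infimum.
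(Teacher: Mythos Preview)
Your approach differs substantially from the paper's, which is essentially a literature review: the paper cites \cite[Lemma~1]{KKLS21} for the diameter bound (applied to $d_2$-local minimizers), uses weak compactness of $\cP(\overline{B_{e^{1/\bt}}})$ and weak continuity of the energy for existence, and cites \cite{BalagueCarrilloLaurentRaoul13} for \eqref{Euler-Lagrange}. Your first-variation derivation of \eqref{Euler-Lagrange} is correct and more informative than a bare citation, and your Jensen computation yielding $B(x_0)\le (\al/\bt)^{\bt/(\al-\bt)}\le e$ is a nice partial step toward the diameter bound---the upgrade to $\diam\spt\mu\le z_{\al,\bt}:=(\al/\bt)^{1/(\al-\bt)}$ that you flag as the ``main obstacle'' is exactly what \cite{KKLS21} supplies and the paper does not reprove either.

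There is, however, a genuine gap in your existence argument beyond the one you acknowledge. You claim that for a minimizing sequence $(\mu_k)$, an approximate diameter bound forces the supports $\spt\mu_k$ into a common compact ball. This is false: take any minimizer $\mu^\star$ and set $\mu_k=(1-\tfrac1k)\mu^\star+\tfrac1k\delta_{ke_1}$; then $\cE_{W_{\al,\bt}}(\mu_k)\to\cE_{W_{\al,\bt}}(\mu^\star)$ while $\diam\spt\mu_k\to\infty$. More structurally, your ordering creates a circularity: you derive the diameter bound from \eqref{Euler-Lagrange}, which presupposes a minimizer, and then invoke it to prove one exists. The clean fix is to use your own Jensen estimate \emph{before} invoking \eqref{Euler-Lagrange}: for any $\mu$ with $\cE_{W_{\al,\bt}}(\mu)\le 0=\cE_{W_{\al,\bt}}(\delta_0)$ one has $I_\al\le(\al/\bt)I_\bt\le(\al/\bt)I_\al^{\bt/\al}$, hence $I_\al:=\iint|x-y|^\al\,d\mu\,d\mu\le(\al/\bt)^{\al/(\al-\bt)}$ uniformly along the minimizing sequence. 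This moment bound (after translation) gives tightness via Prokhorov without any appeal to \eqref{Euler-Lagrange} or to support diameters. Alternatively, adopt the paper's shortcut: minimize first on $\cP(\overline{B_{e^{1/\bt}}})$ by weak compactness and continuity, then invoke the \cite{KKLS21} bound---which applies to $d_2$-\emph{local} minimizers and hence to the restricted minimizer---to conclude it is global.
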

\begin{proof}
For $\al > \bt > 0$, \cite[Lemma 1]{KKLS21} shows the diameter of support of all ($d_2$-local) minimizers for $ \mathcal{E}_{W_{\al,\bt}}$ is bounded by the positive zero, say $z_{\al,\bt}$, of the function $w_{\al,\bt}(r)=r^\al/\al - r^\bt/\bt$. It is easily seen that $z_{\al,\bt}$ increases as $\al \searrow \bt$ to the limit $e^{1/\bt}$. Now to show the existence of minimizers, consider the set $\cP\left(\overline{ B_{e^{1/\bt}}}\right)$ of probability measures concentrated in the centered closed ball $\overline{B_{e^{1/\bt}}}$ of radius $e^{1/\bt}$. As $\cP\left(\overline{ B_{e^{1/\bt}}}\right)$ is weakly compact and $\mu \in \cP\left(\overline{B_{e^{1/\bt}}}\right) \mapsto  \mathcal{E}_{W_{\al,\bt}}(\mu)$ is weakly continuous, this energy must attain a minimizer. By the a priori diameter estimate mentioned above, this minimizer on
$\cP\left(\overline{B_{e^{1/\bt}}}\right)$ also minimizes $ \mathcal{E}_{W_{\al,\bt}}$ among probability measures on $\R^n$. The Euler-Lagrange equation \eqref{Euler-Lagrange} for minimizers 
is established e.g.~in \cite{BalagueCarrilloLaurentRaoul13}.
\end{proof}
Let a {\em (centered) spherical shell}  denote the uniform probability measure $ \sigma_R$ 
on a (centered) sphere of radius $R \ge 0$. 
Also, let 
\be\label{centered}
\cP_0(\R^n)=\{ \mu \in \cP(\R^n) \ | \ \int |x| d\mu(x) < \infty, \int x d\mu(x) = 0\}
\ee
denote the set of probability measures with center of mass at the origin,
and 
$
\cP^{ss}_c(\R^n) \subset \cP_0(\R^n) 
$ 
the subset consisting of spherically symmetric measures (i.e. those invariant under the action of the orthogonal group $O(n)$) having compact support.
The first contribution of the present manuscript is to establish the following result for potentials $W_{\al,\bt} *\mu$ of spherically symmetric measures in an interval $(\al,\bt) \in (2,4) \times \{2\}$ along the mildly repulsive frontier:

\begin{theorem}[Radial potentials have a single inflection point]\label{T:main}
Fix 
\begin{equation}\label{range}
\begin{cases}
\bt = 2<\al<4 & {\rm if}\ n \ge 2, 
\\ {\bt = 2< \al -1} & {\rm if}\ n=1,   
\end{cases}
\end{equation}
and let $e_1 \in \R^n$ denote a unit vector. 
If $\mu \in \cP^{ss}_c(\R^n)$ then $f(r) = f_\mu(r) := (W_{\al,\bt}*\mu)(r e_1)$ satisfies
$f'''(r)>0$ for all $r>0$; moreover
$f \in C^3_{loc}(\R \setminus \{0\}) \cap C_{loc}^{\lfloor \alpha\rfloor,\al-\lfloor \al \rfloor}(\R)$,
where $\lfloor \al \rfloor$ denotes the largest integer $k\le \al$. In particular,
there exists $0 \le R <\infty$ such that $f(r)$  is strictly concave on the interval $|r| \le R$, and strictly convex on $r>R$. 
 \end{theorem}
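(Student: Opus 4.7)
The plan is to reduce the theorem to a single scalar claim about the unit-radius spherical shell. Writing $W_{\al,\bt} = W_\al - W_\bt$ with $\bt=2$, the $W_\bt$-contribution to $W_{\al,\bt}*\mu$ evaluates to the quadratic polynomial $\tfrac{r^2}{2} + \tfrac{1}{2}\int|y|^2\,d\mu(y)$ (using centering, which is forced by spherical symmetry) and therefore has vanishing third derivative. So it suffices to show $(W_\al*\mu)'''>0$ radially. Disintegrating the spherically symmetric $\mu$ as $\mu = \int_0^\infty \sigma_s\,d\rho(s)$ and exploiting the homogeneity $(W_\al*\sigma_s)(re_1) = s^\al \phi_\al(r/s)$ with $\phi_\al(r) := (W_\al*\sigma_1)(re_1)$, the problem reduces to proving $\phi_\al'''(u)>0$ for every $u>0$; the Dirac contribution at $s=0$ gives $r^\al/\al$ whose third derivative $(\al-1)(\al-2)r^{\al-3}$ is manifestly positive for $\al>2$.

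Direct differentiation under the integral yields, with $D = |re_1-\om|^2 = r^2-2r\om_1+1$,
\[
\phi_\al'''(r) \;=\; (\al-2)\int_{S^{n-1}}(r-\om_1)\,D^{\al/2-3}\bigl[(\al-1)(r-\om_1)^2+3(1-\om_1^2)\bigr]\,d\sigma_1(\om).
\]
For $r\ge 1$ the factor $(r-\om_1)\ge 0$ holds for every $\om_1\in[-1,1]$, while the remaining factors are nonnegative and strictly positive on a set of positive measure, so $\phi_\al'''(r)>0$ follows immediately (even at $r=1$, where the integrable singularity from $D^{\al/2-3}$ at $\om_1=1$ is absorbed by the vanishing of the other factors). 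In the one-dimensional case $\sigma_1 = \tfrac12(\de_{-1}+\de_1)$ a direct computation gives $\phi_\al'''(r) = \tfrac{(\al-1)(\al-2)}{2}[(1+r)^{\al-3} - (1-r)^{\al-3}]$ for $r\in(0,1)$, which is strictly positive exactly when $\al>3$; this threshold also matches the $C^3$-regularity requirement at $r=1$, explaining the hypothesis $\al-1>2$ when $n=1$.

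The substantive work lies in the case $n\ge 2$ with $0<r<1$, where $(r-\om_1)$ changes sign at $\om_1=r$. The strategy I would try is to substitute $v = r-\om_1$, split the integral at $v=0$, and pair $v\in[-(1-r),0]$ with $v\in[0,1-r]$ by reflection; the residual interval $v\in[1-r,1+r]$ contributes positively by inspection. This reduces positivity to a pointwise comparison $F(v) > F(-v)$ for an explicit $F$ built from $D^{\al/2-3}$, the quadratic bracket, and the Jacobi weight $(1-(r-v)^2)^{(n-3)/2}$. The genuine difficulty is that $D^{\al/2-3}$ is larger at $-v$ (since $\al/2-3<0$) while the bracket and, for $n\ge 3$, the Jacobi weight are larger at $+v$, so the sign requires a quantitative balance that genuinely exploits $2<\al<4$. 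I would pursue two complementary routes: (i) integration by parts in $\om_1$ using $\p_{\om_1}D^{\al/2-2}\propto (r-\om_1)D^{\al/2-3}$ to absorb the singularity, and (ii) the subordination representation $|x|^\al = c_\al\int_0^\infty(e^{-\la|x|^2}-1+\la|x|^2)\la^{-1-\al/2}\,d\la$, valid for $\al\in(2,4)$, which rewrites $\phi_\al'''$ as a $\la$-integral of $\mE[(r-\Om_1)(3-2\la(r-\Om_1)^2)e^{-\la D}]$ analyzable through the spherical exponential integral $J_n(a) = \mE[e^{a\Om_1}]$. Verifying this sign is where I expect the main technical hurdle to lie.

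Once $\phi_\al'''>0$ is established, the regularity assertions follow from standard considerations: $W_{\al,\bt}\in C^{\lfloor\al\rfloor,\al-\lfloor\al\rfloor}_{loc}(\R^n)$ globally with singularity only at the origin, convolution with a compactly supported probability preserves this H\"older regularity, and transversal averaging over the $(n-1)$-dimensional shell provides enough smoothing to upgrade to $C^3_{loc}(\R\setminus\{0\})$ in the stated range (matching the $n=1$ threshold $\al>3$ at $r=1$). The structural conclusion is then automatic: since $f''$ is even and strictly increasing on $(0,\infty)$ by $f'''>0$, either $f''(0)\ge 0$ and we take $R=0$, or $R$ is the unique positive zero of $f''$, giving strict concavity on $[-R,R]$ and strict convexity on $|r|>R$.
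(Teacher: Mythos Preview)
Your reduction to the unit shell and your third-derivative formula are correct, and the $r\ge 1$ and $n=1$ cases are fine. But the proof has a genuine gap: the core case $n\ge 2$, $0<r<1$ is not established. You yourself flag this as ``the substantive work'' and only sketch two possible routes without executing either. Neither sketch is convincing as stated. For route (i), note that $\partial_{\om_1}D = -2r$, not a multiple of $(r-\om_1)$, so the claimed integration-by-parts identity is not right; you would need $\partial_r$ instead, which does not help with a pointwise-in-$r$ sign. For route (ii), the Gaussian subordination gives the correct representation $\partial_r^3 e^{-\la D}=4\la^2(r-\om_1)(3-2\la(r-\om_1)^2)e^{-\la D}$, but the resulting spherical expectation $\mE[(r-\Om_1)(3-2\la(r-\Om_1)^2)e^{2\la r\Om_1}]$ has no evident sign for all $\la>0$ and $r\in(0,1)$; this would require a nontrivial Bessel-type argument you have not supplied.

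The paper's route is quite different and avoids both ideas. Using $(r-y_1)^2 = D - (1-y_1^2)$ on the sphere, the third derivative splits as
\[
\frac{f_\sigma'''(r)}{\al-2} = (\al-1)\,g_\al(r) + (4-\al)\,G_\al(r),
\]
where $g_\al(r)=\int(r-y_1)D^{(\al-4)/2}\,d\sigma$ and $G_\al(r)=\int(r-y_1)(1-y_1^2)D^{(\al-6)/2}\,d\sigma$. Both coefficients are positive exactly on $2<\al<4$, so it suffices to show $g_\al,G_\al>0$ separately. The key trick is a scaling argument: rewriting each integral over the sphere of radius $s=s(r)$ chosen so that $\log|sre_1-y|$ and $(sr-y_1)$ have the same sign on $|y|=s$, one shows $\partial_\al g_\al>0$ whenever $g_\al\le 0$ (and likewise for $G_\al$), reducing the question to $\al=2$. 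At $\al=2$ and $n=2$ both integrals vanish identically for $|r|<1$ by classical complex-analytic identities (the $\log|r^2-2r\cos\theta+1|$ integral and a companion); for $n>2$ the extra weight $\sin^{n-2}\theta$ is increasing on $[0,\pi/2]$ and strictly enhances the positive half of each integrand relative to the $n=2$ baseline, yielding strict positivity. This combination of monotonicity in $\al$, exact evaluation at the endpoint, and a weight-comparison in $n$ is the missing idea.
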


The core of its proof is to establish existence and positivity of the third derivative $f'''(r)>0$ for all $r>0$.
If $R=0$ the asserted strict concavity is vacuous, and $f$ is strictly convex globally.

Since the potential $W_{\al,\bt}*\mu$ appears in the Euler-Lagrange equation \eqref{Euler-Lagrange},
it is no surprise that this theorem implies the following corollary 
concerning stationary solutions of the evolution \eqref{aggregation}.
While statement (a) assumes spherical symmetry,  all global minimizers (c) inherit this symmetry from the strict convexity
established by Lopes \cite{L19} for the energies \eqref{range} on {\em centered} probability densities, 
which extends to singular measures {and $d_\infty$-local minimizers as in 
\cite{CarrilloDelgadinoDolbeaultFrankHoffman19} \cite{CarrilloShu21+}} and Appendix~\ref{S:Strict convexity} below.

\begin{corollary}[Classifying energy minimizers and stationary solutions on part of the centrifugal line]
\label{C:abc}
Assume {$\bt=2$ and $(\al,n)$} satisfy 
\eqref{range}. 
\\ (a) If $\nabla (W_{\al,\bt} * \mu)=0$ holds $\mu$-a.e. for some $\mu \in \cP^{ss}_c(\R^n)$, then
$\mu = (1-s)\sigma_{r} + s \delta_0$,  where $s\in [0,1]$ and $\sigma_{r}$ is the uniform probability on the sphere of radius $r>0$.
\\ (b) If $n \ge 2$ and $\mu$ minimizes the energy $\mathcal{E}_{W_{\al,\bt}}$ on some open $d_\infty$-ball 
in $\cP_c(\Rn)$, 
 then $\mu$ is a translate of the spherical shell $\sigma_{r_*}$ of radius 
$r_* =\big{(}\frac{2}{ f'_{\si_1}(1)+2} \big{)}^{\frac1{\al-2}}$, where $f_{\sigma_1}(r)$ is defined in Theorem \ref{T:main}.  
\\ (c) For $n \ge 1$, the energy $\mathcal{E}_{W_{\al,\bt}}$ is minimized uniquely on $\cP(\Rn)$  by the spherical shell $\si_{r_*}$ and its translates.
\\ (d) Conclusion (c) extends also to the limit case $(\al,\bt,n)=(3,2,1)$;
\\ (e) If $n=1$ and $\mu$ minimizes the energy $\mathcal{E}_{W_{\al,\bt}}$ on some open $d_\infty$-ball 
in the space $\cP^{ss}_c(\Rn)$ of even measures, 
 then $\mu=\sigma_{r_*}$.
\end{corollary}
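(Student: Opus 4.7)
The entire corollary hinges on the structure Theorem~\ref{T:main} gives for the radial potential $f_\mu(r):=(W_{\al,\bt}*\mu)(re_1)$ of any $\mu\in\cP^{ss}_c(\R^n)$: evenness, $C^2$-smoothness away from the origin, strict concavity on $|r|\le R$, and strict convexity on $|r|>R$, for some inflection radius $R=R(\mu)\in[0,\infty)$. Combined with $f'_\mu(0)=0$ (from smoothness and evenness), this shape forces $f_\mu$ to have exactly one critical point $r_0=r_0(\mu)>R$ on $(0,\infty)$, at which it attains its strict global minimum on $[0,\infty)$, while the origin is a strict local maximum whenever $R>0$. Part~\emph{(a)} is then immediate: the identity $\nabla(W_{\al,\bt}*\mu)(x)=f'_\mu(|x|)x/|x|$ reduces the hypothesis to $\supp\mu\subset\{0\}\cup\{|x|=r_0\}$, and spherical symmetry yields $\mu=(1-s)\sigma_{r_0}+s\delta_0$.

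For part~\emph{(b)}, I would invoke strict convexity of $\mathcal{E}_{W_{\al,\bt}}$ on centered probability measures (Lopes \cite{L19}), extended to singular measures and $d_\infty$-local minimizers in Appendix~\ref{S:Strict convexity}, to conclude that every centered $d_\infty$-local minimizer coincides with the unique global centered minimizer $\mu_*$. Rotational invariance of $\mathcal{E}_{W_{\al,\bt}}$ combined with this uniqueness promotes $\mu_*$ to $\cP^{ss}_c(\R^n)$, so part~(a) gives $\mu_*=(1-s)\sigma_r+s\delta_0$. The Euler--Lagrange condition \eqref{Euler-Lagrange} rules out both $s\in(0,1)$ and $s=1$: whenever $R(\mu_*)>0$ the origin is a strict local max of $W_{\al,\bt}*\mu_*$, hence outside $\argmin(W_{\al,\bt}*\mu_*)$, while $\mu_*=\delta_0$ fails \eqref{Euler-Lagrange} since $W_{\al,2}$ with $\al>2$ attains its minimum on the unit sphere rather than at $0$. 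Hence $\mu_*=\sigma_{r_*}$, where $r_*$ is the unique positive solution of $f'_{\sigma_{r_*}}(r_*)=0$ obtained by specializing \eqref{Euler-Lagrange} to $\sigma_{r_*}$.

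Parts~\emph{(c)} and the $\al>3$ portion of \emph{(d)} follow from (b) applied to the global minimizer supplied by Proposition~\ref{exist}. When $n=1$ the reflection $x\mapsto -x$ plays the role of nontrivial rotation, so the unique centered minimizer (guaranteed by the appendix's strict convexity in the range $\bt=2<\al-1$) is automatically even, hence lies in $\cP^{ss}_c(\R)$; part~(a) then concludes. The endpoint $(\al,\bt,n)=(3,2,1)$ in (d) I would treat by taking $\al\downarrow 3$: Proposition~\ref{exist}'s uniform diameter bound provides tightness of the minimizers $\sigma_{r_*(\al)}$, and a direct one-dimensional computation using $\sigma_1=\tfrac12(\delta_1+\delta_{-1})$ shows $r_*(\al)\equiv 1/2$ in this $n=1,\bt=2$ setting, so the weak-$*$ limit is $\sigma_{1/2}$. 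Uniqueness at $\al=3$ then comes either from the appendix's strict convexity reaching this endpoint or from a separate 1D convexity argument on $\mathcal{E}_{W_{3,2}}$.

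For part~\emph{(e)}, no strict convexity is needed since $\mu\in\cP^{ss}_c(\R)$ is imposed. Given a $d_\infty$-local minimizer in this class, I consider symmetric two-point variations moving equal masses from $\{x,-x\}\subset\supp\mu$ to $\{y,-y\}$ with $|y-x|$ small; these perturbations are $d_\infty$-admissible and preserve evenness, and local optimality together with the evenness of $W_{\al,\bt}*\mu$ yields $(W_{\al,\bt}*\mu)(y)\ge(W_{\al,\bt}*\mu)(x)$, so every $x\in\supp\mu$ is a local minimum of $W_{\al,\bt}*\mu$. By Theorem~\ref{T:main} the set of local minima on $\R$ is exactly $\{-r_0,r_0\}$; evenness forces $\mu=\sigma_{r_0}$, and self-consistency $f'_{\sigma_{r_0}}(r_0)=0$ identifies $r_0=r_*$. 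The subtlest step throughout lies in (b), where the transfer of strict convexity from global to $d_\infty$-local minimizers is delicate: the natural linear interpolation $(1-t)\mu_0+t\mu_*$ need not be $d_\infty$-continuous in $t$, so one must interpolate carefully inside the local ball as arranged in the appendix; the endpoint $\al=3$ in (d), where Theorem~\ref{T:main} is not directly available, is a secondary obstacle.
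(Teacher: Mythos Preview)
Your proposal largely aligns with the paper for parts (a), (c), (d), and (e). The real issue is part (b). You claim strict convexity lets you conclude that ``every centered $d_\infty$-local minimizer coincides with the unique global centered minimizer $\mu_*$,'' and you defer to the appendix for this. But the appendix (Corollary~\ref{radialmin}) proves no such thing: it establishes only \emph{spherical symmetry} of $d_\infty$-local minimizers, by comparing $\mu$ with the midpoint $\tfrac12(\mu+R\mu)$ for a small rotation $R$ --- this midpoint \emph{is} $d_\infty$-close to $\mu$, which is precisely the trick that makes the argument go through. The interpolant $(1-t)\mu+t\mu_*$ toward the global minimizer, by contrast, is generically bounded away from $\mu$ in $d_\infty$ for all $t>0$ whenever the supports differ (as you yourself observe), so strict convexity along that segment yields no contradiction with $d_\infty$-local minimality. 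The step ``local $=$ global'' is therefore a genuine gap as written, and the appendix does not close it.

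The paper's route avoids this trap and, in fact, uses the same mechanism you deploy in (e): from $d_\infty$-local minimality one extracts the \emph{local} Euler--Lagrange condition that every $x\in\spt\mu$ is a local minimum of $W_{\al,\bt}*\mu$ (citing \cite{McCann06}, \cite{BalagueCarrilloLaurentRaoul13}), hence $\nabla(W_{\al,\bt}*\mu)=0$ holds $\mu$-a.e. Combined with the spherical symmetry furnished by Corollary~\ref{radialmin}, this places $\mu$ itself --- not $\mu_*$ --- in the scope of part~(a), giving $\mu=(1-s)\sigma_r+s\delta_0$; then $s>0$ is ruled out because the origin is a strict local \emph{maximum} of the potential (the paper verifies $R>0$ by noting that $R=0$ would force $\mu=\delta_0$, whence $f''(0)=-1<0$, a contradiction). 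The paper actually proves (b) and (e) by a single argument along these lines; your treatment of (e) is essentially correct, and the fix for (b) is simply to reuse that argument after invoking Corollary~\ref{radialmin} for symmetry, rather than attempt to promote local to global directly through convexity.
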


A spherical shell fails to minimize $ \mathcal{E}_{W_{\al,\bt}}$ if $\bt=2 < \al <3$ and $n=1$ however; see \cite{Frank22} 
for the minimizer and Remark \ref{1D references}.

Note that Corollary (c) 
can also be
proved by using Theorem \ref{T:main} to verify that some spherical shell satisfies
the Euler-Lagrange/Kuhn-Tucker conditions \eqref{Euler-Lagrange}.  By the energetic convexity, this is not only necessary but also sufficient to identify a global minimum.  The problem displays an unexpected subtlety however: normally, a convex gradient-flow cannot display non-minimizing stationary solutions,  such as those described in Example \ref{E:ring}.
Evidently these discrete ring solutions $\mu$, although stationary, 
can only satisfy a localized version of the Euler-Lagrange / Kuhn-Tucker conditions \eqref{Euler-Lagrange}, 
in which   $W*\mu$ attains a local but not a global minimum on $\spt \mu$. 

In the complementary range
\begin{equation}\label{CSrange}
(\alpha,\beta) \in [2,4] \times (-n,2)
\end{equation}
a different classification problem was solved  by
Carrillo and Shu \cite{CarrilloShu21+}
who established that
all compactly supported $d_\infty$-local minimizers are spherically symmetric. 
(In a more restricted range of exponents,  they are able to give a precise description of the global minimizers:
on the {\em centripetal} line $4-n<\bt < 2=\al$ their minimizers  turn out to be spherical shells.) Stimulated by an anonymous referee's query,  we improve Corollaries \ref{C:abc}(b)(e) and \ref{radialmin}, which originally addressed only {minimizers relative to a symmetry constraint,  to include $d_\infty$-local minima as well. We do this by extending Carrillo and Shu's classification to the relative interior $(2,4) \times \{2\}$ of the corresponding interval on the centrifugal line when $n \ge 2$ (so nontrivial rotations exist).
However, the next theorem shows no such extension can hold at the right endpoint  $(\al,\bt)=(2,4)$;   it also fails at the left endpoint where $W_{2,2}=0$. 

The results of our companion paper \cite{DaviesLimMcCann21+b}, which also extend to the interior $\al>\bt$ of the mildly repulsive regime $\bt \ge 2$,  allow us to complete this characterization of minimizers on the frontier $\bt=2$ as follows,  at least for  $n \ge 2$.  
A set $K \subset \R^n$ is called a {\rm unit $ n$-simplex} if it is the convex hull of $ n+1$ points $\{x_0, x_1,...,x_{n}\}$ in $\R^n$ satisfying $|x_i-x_j|=1$ for all $0 \le i < j \le n$. 
 The points $\{x_0, x_1,...,x_{n}\}$ are called {\em vertices} of the simplex. We define 
 \begin{align}\label{simplex}
\cP_{\Delta^n} & :=\{\nu \in \cP(\R^n) \ | \ \nu \text{ is uniformly distributed}\\ &\qquad \qquad \text{over the vertices of a unit $n$-simplex.} \} \nn
\end{align}

\begin{theorem}[Simplices uniquely minimize energy over much of the mildly repulsive regime \cite{DaviesLimMcCann21+b}]
\label{b=2}
Let $n \ge 2=\bt$.\\
(i) If $2<\al <4$, then the {unique} minimizer of $\mathcal{E}_{W_{\al,2}}$ on \eqref{centered}  is given by a spherical shell of {positive} radius. 
\\
(ii) If $\al=4$, then $\mu \in \cP_0(\R^n)$ minimizes  $\mathcal{E}_{W_{\al,2}}$ 
if and only if $\mu$ is concentrated on the centered sphere of radius $ \sqrt{\frac{n}{2n+2}}$ with 
\be\label{tensor}
\int x \otimes x \, d\mu(x) = \bigg( \int x_ix_j d\mu(x)\bigg)_{1 \le i,j \le n}= \frac{1}{2n+2} \Id,
\ee
where $\Id$ denotes the $n \times n$ identity matrix.\\
(iii) If $\al >4$, the set of minimizers of  $\mathcal{E}_{W_{\al,2}}$ 
is precisely $\cP_{\Delta^n}$.
\end{theorem}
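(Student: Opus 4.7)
The plan is to treat the three cases separately: parts (i) and (iii) reduce to previously established results, while part (ii) admits a direct closed-form calculation because $W_{4,2}$ is polynomial.

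For part (i), the parameters satisfy $n \ge 2$ and $\bt = 2 < \al < 4$, which is precisely the range \eqref{range} of Corollary \ref{C:abc}(c); that corollary identifies the unique minimizer of $\mathcal{E}_{W_{\al,2}}$ on $\cP(\R^n)$ as the centered spherical shell $\si_{r_*}$, up to translation. Restricting to the centered measures $\cP_0(\R^n)$ removes the translational ambiguity and yields literal uniqueness of the minimizer.

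For part (ii), I would exploit the polynomial form of $W_{4,2}(z) = |z|^4/4 - |z|^2/2$ to write, for any $\mu \in \cP_0(\R^n)$ of finite fourth moment,
\[
\mathcal{E}_{W_{4,2}}(\mu) = \frac{m_4 + m_2^2}{4} + \frac{\Tr(A^2)}{2} - \frac{m_2}{2},
\]
where $m_k := \int |x|^k \, d\mu(x)$ and $A := \int x \otimes x \, d\mu(x)$. This identity follows by expanding $|x-y|^4 = (|x|^2 + |y|^2 - 2\,x \cdot y)^2$, using $\int x \, d\mu = 0$ to annihilate the cross terms $\iint (x \cdot y)|x|^2 \, d\mu\, d\mu$ and $\iint (x \cdot y)|y|^2 \, d\mu\, d\mu$, and recognizing $\iint (x \cdot y)^2 \, d\mu\, d\mu = \Tr(A^2)$. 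Three elementary inequalities then sharpen to a single lower bound: (a) the scalar Jensen inequality $m_4 \ge m_2^2$, with equality iff $|x|$ is $\mu$-a.e.\ constant; (b) the symmetric-matrix Cauchy--Schwarz inequality $\Tr(A^2) \ge \Tr(A)^2/n = m_2^2/n$, with equality iff $A = (m_2/n)\,\Id$; and (c) minimization of the resulting bound $(n+1)\,m_2^2/(2n) - m_2/2$ over $m_2 \ge 0$, achieved uniquely at $m_2 = n/(2n+2)$. Joint equality then forces $\mu$ to be supported on the sphere of radius $\sqrt{n/(2n+2)}$ and to satisfy the isotropy condition \eqref{tensor}; conversely any such $\mu$ saturates all three inequalities. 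A direct check confirms that $\si_{r_*}$ itself lies in this family, so the lower bound is sharp.

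Part (iii) is attributed to our companion paper \cite{DaviesLimMcCann21+b}, and I would simply invoke its main theorem at $(\al,\bt)$ with $\al > 4$ and $\bt = 2$. The principal subtlety is confined to part (ii): one must verify that the three inequalities (a)--(c) remain mutually independent, so that their simultaneous equality isolates precisely the measures described in \eqref{tensor} rather than a strictly larger set. Since (a) constrains only the radial support while (b) constrains only second-moment isotropy, these conditions are complementary and no further reduction is required.
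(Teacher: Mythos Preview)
Your argument is correct. The paper itself does not supply a proof of Theorem~\ref{b=2}: it is stated with attribution to the companion work \cite{DaviesLimMcCann21+b}, and parts (ii)--(iii) are not argued anywhere in the present text (part (i) coincides, as you say, with Corollary~\ref{C:abc}(c)). So there is no in-paper proof to compare against.

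Your handling of (i) and (iii) matches the paper's intent exactly. For (ii), your direct moment computation is a clean, self-contained argument that the paper does not give. I checked the identity
\[
\mathcal{E}_{W_{4,2}}(\mu)=\frac{m_4+m_2^2}{4}+\frac{\Tr(A^2)}{2}-\frac{m_2}{2}
\]
and the chain $m_4\ge m_2^2$, $\Tr(A^2)\ge m_2^2/n$, followed by optimization over $m_2$; all steps are sound, the equality conditions are correctly identified, and the resulting constraints match \eqref{tensor} and the stated radius $\sqrt{n/(2n+2)}$. The only point you leave implicit is that measures in $\cP_0(\R^n)$ with infinite fourth moment have energy $+\infty$ and so cannot compete; this is immediate since the positive part $|x-y|^4/4$ then diverges while the negative part is controlled by the second moment, but it is worth a sentence. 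Otherwise your proof of (ii) is complete and arguably more transparent than deferring to the companion paper.
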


In the next section we prove Theorem \ref{T:main} and its corollaries.  
A subsequent chapter discusses nonlinear stability implications for the evolution \eqref{aggregation}
near minimizers such as the spherical shell.  This is followed by an appendix
extending a
 strict convexity result for the energy shown by Lopes \cite{L19} for densities to measures,
 also obtained independently in \cite{CarrilloDelgadinoDolbeaultFrankHoffman19}.

\section{Spherical shells minimize for $\bt = 2<\al<4$} 
Denote by $\si_R \in \cP_0(\R^n)$ the uniform measure on a sphere of $R \ge 0$ --- called a {\em spherical shell} --- and let $\si:=\si_1$ denote the unit spherical shell. 
For $y \in \R^n$, denote $y_i = e_i \cdot y$ for $i=1,...,n$, where $\{e_i\}_{1 \le i \le n}$ is the standard basis of $\R^n$.
{In this section we shall establish Theorem~\ref{T:main}} whose proof uses the following
elementary lemma to establish positivity of the radial third derivative of the potential $W_{\al,2}*\mu$ induced by any spherically symmetric measure $\mu \in \cP^{ss}_c(\Rn)$.
Positivity of this third derivative
shows that as $|x|$ increases, the radial profile of $W_{\al,2}*\mu$ can transition from concave to convex but not from convex to concave.  
Thus it is minimized by a unique positive radius.  The Euler-Lagrange equation \eqref{Euler-Lagrange} satisfied by a minimizer then forces $\mu$ to be a spherical shell
(or a convex combination of such a shell with a Dirac measure at the origin, which requires an additional argument to rule out).
The next lemma extends the positivity of \eqref{gal} shown for $\al<4$ by \cite[Lemma 4.4]{Dong11} to larger values of $\al$, c.f. \cite{BalagueCarrilloLaurentRaoul13N}  \cite{BalagueBarbaroCarrilloVolkin20} \cite{CarrilloShu21+},
as well as establishing \eqref{Gal}. To prove Theorem \ref{T:main}, we shall eventually 
show the third derivative \eqref{f3} of $f(r) = (W_{\al,2}*\mu)(re_1)$ to be a positive combination of such integrals when $\mu=\sigma$ is a spherical shell.

\begin{lemma}[Positivity of certain spherical integrals]
\label{positive}
Let $n \ge 2$. Then for all $\al > 2$, the following integrals are absolutely convergent for any $r \in \R$, and define  continuous odd functions $g_\al$ and $G_\al$ on $\R$. Moreover for all $r > 0$ they are positive:
\begin{align}
\label{gal} 
g_\al(r) & :=\int (r-y_1) | re_1 -y |^{\al-4} d\si(y) > 0 \quad {\rm and}
\\ \label{Gal}
G_\al(r) &:=\int  | re_1 -y |^{\al-6}(r-y_1)(1-y_1^2)d\si(y) > 0.
\end{align}
\end{lemma}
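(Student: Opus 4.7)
The plan is to dispose of the elementary claims (integrability, continuity, oddness) first, then establish strict positivity on $r > 0$ by splitting into the easy case $r \ge 1$ and the delicate case $0 < r < 1$. Since the integrands depend on $y \in S^{n-1}$ only through $y_1 = e_1 \cdot y$, the Funk--Hecke slice formula lets me rewrite
\begin{align*}
g_\al(r) &= c_n \int_{-1}^{1} (r-t)\,(r^2-2rt+1)^{(\al-4)/2} (1-t^2)^{(n-3)/2}\,dt,\\
G_\al(r) &= c_n \int_{-1}^{1} (r-t)\,(r^2-2rt+1)^{(\al-6)/2} (1-t^2)^{(n-1)/2}\,dt,
\end{align*}
with $c_n > 0$. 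The only possible singularity is at $r = 1, t = 1$ (or its negative mirror); a polar expansion $t = \cos\theta$ there shows the integrand times the measure behaves like $\theta^{\al+n-4}\,d\theta$, integrable whenever $\al + n > 3$. Dominated convergence then gives joint continuity in $r$, and the substitution $y \mapsto -y$ (equivalently $t \mapsto -t$ in the slice form) shows $g_\al(-r) = -g_\al(r)$, with $G_\al$ handled identically.

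For $r \ge 1$ I would use the algebraic identity $r - t = (|re_1-y|^2 + r^2 - 1)/(2r)$, valid for $|y| = 1$, to obtain
\[
g_\al(r) = \frac{1}{2r}\bigl[I_{\al-2}(r) + (r^2-1)\,I_{\al-4}(r)\bigr],\qquad I_s(r) := \int |re_1-y|^s\,d\sigma(y),
\]
with the analogous decomposition for $G_\al$ (absorbing the extra weight $1-y_1^2$ into each $I$-integral). On $r \ge 1$ both summands are non-negative and the first is strictly positive, yielding positivity of $g_\al$ and of $G_\al$.

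The main effort concerns $0 < r < 1$. I would perform the change of variable $u = r^2 - 2rt + 1$ followed by $u = (1-r^2)\,e^v$, so that $v$ traverses the symmetric interval $[-L, L]$ with $L := \log\tfrac{1+r}{1-r}$. The identities
\begin{align*}
u - (1-r^2) &= 2(1-r^2)\,e^{v/2}\sinh(v/2),\\
\bigl(u - (1-r)^2\bigr)\bigl((1+r)^2 - u\bigr) &= 4(1-r^2)^2\,e^v\,\sinh\tfrac{L+v}{2}\sinh\tfrac{L-v}{2},
\end{align*}
combined with $(1-t^2) = (u-(1-r)^2)((1+r)^2-u)/(4r^2)$, reduce both integrands --- modulo a common positive prefactor depending only on $(r, n, \al)$ --- to
\[
\sinh(v/2)\,e^{v k}\,S(v)^{m}\,dv,\qquad S(v) := \sinh\tfrac{L+v}{2}\sinh\tfrac{L-v}{2},\qquad k := \tfrac{\al+n-4}{2},
\]
where $m = (n-3)/2$ for $g_\al$ and $m = (n-1)/2$ for $G_\al$, and $S(v) > 0$ is an even function of $v$. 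Pairing the integrand at $v$ with its value at $-v$ converts $\sinh(v/2)\,e^{vk}$ into $2\sinh(v/2)\sinh(vk)$, which is strictly positive on $v \in (0, L]$ precisely when $k > 0$, i.e., $\al + n > 4$. The hypothesis $\al > 2$ with $n \ge 2$ delivers this, completing the case $0 < r < 1$.

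The main obstacle I foresee is the careful bookkeeping of the two-step change of variables and the hyperbolic factorization of $\bigl(u - (1-r)^2\bigr)\bigl((1+r)^2 - u\bigr)$; once in place, the symmetrization collapses immediately and treats $g_\al$ and $G_\al$ uniformly. In fact the slice formulas above exhibit $G_\al^{(n)}(r) = (c_n/c_{n+2})\,g_{\al-2}^{(n+2)}(r)$ --- so $G$ in dimension $n$ is, up to a positive constant, the same as $g$ with $(\al, n)$ replaced by $(\al - 2, n+2)$ --- and since this transformation preserves the key inequality $\al + n > 4$, only the $g$-case really needs the pairing argument.
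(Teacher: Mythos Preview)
Your argument is correct and genuinely different from the paper's. The paper proceeds by a bootstrap in $\alpha$: after a scaling that makes the logarithmic weight change sign exactly with the integrand, it shows $\partial_\alpha g_\alpha(r)>0$ whenever $g_\alpha(r)\le 0$, reducing the question to the endpoint $\alpha=2$. That endpoint is then handled separately for $n=2$ (via the classical identity $\int_0^\pi \log|r^2-2r\cos\theta+1|\,d\theta=0$ for $|r|<1$, and an analogous trigonometric computation for $G_2$) and for $n>2$ (by noting that the extra weight $\sin^{n-2}\theta$ suppresses the negative lobe and enhances the positive lobe of an integrand whose unweighted version vanishes). The same two-step scheme is repeated for $G_\alpha$.

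Your route bypasses all of this: the substitution $u=r^2-2rt+1$ followed by $u=(1-r^2)e^v$ maps the slice integral onto the symmetric interval $[-L,L]$, where the factor $r-t$ becomes an odd function $\sinh(v/2)$, the weight $(1-t^2)^m$ becomes an even function $S(v)^m$, and every remaining factor combines into $e^{kv}$ with $k=(\alpha+n-4)/2$. Pairing $v\leftrightarrow -v$ then gives positivity in one stroke whenever $k>0$, i.e.\ $\alpha+n>4$, which is exactly the hypothesis. This treats all $(\alpha,n)$ with $\alpha>2$, $n\ge 2$ simultaneously, with no case split and no appeal to the $\alpha=2$ boundary identity; it also makes transparent the dimensional shift $G_\alpha^{(n)}\propto g_{\alpha-2}^{(n+2)}$, so only $g$ needs the computation. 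What the paper's approach buys in exchange is an explicit analysis of the threshold $\alpha=2$ (equality when $n=2$, strict inequality when $n>2$), tying the borderline case to Newton's shell theorem; your method recovers the vanishing at $(\alpha,n)=(2,2)$ since then $k=0$, but the strict inequality for $\alpha=2$, $n>2$ would require a separate remark.
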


\begin{remark}[Non-negativity of the limiting integrals]
If instead $\al=2$, the same proof shows $g_\al(r) \ge 0$ and $G_\al(r) \ge 0$ unless $r=1=n-1$ (in which case the integrals 
\eqref{gal} and \eqref{Gal} no longer converge absolutely). Equality holds in either (hence both) inequalities if and only if $n=\al=2$ and $|r|<1$.  This endpoint case is reminiscent of Newton's shell theorem.
\end{remark}

\begin{proof}
When $r=1$, both integrands 
have exponent $\al -3 > -(n-1)$ {on the unit sphere in $\R^n$, hence the integrals are absolutely convergent. It then follows from the dominated convergence theorem that}  $g_\al$ and $G_\al$ are well-defined and continuous in $\R$, and odd. Moreover it is clear that both integrals are positive for all $r \ge 1$. So let us fix $r \in (0,1)$. For \eqref{gal} we claim that it is sufficient to show $g_2(r) \ge 0$. To see this, let us rewrite $g_\al$ by scaling. Let $s=s(r) >1$ be defined by $s^2 - s^2r^2=1$. Then
\begin{align} \label{scaling}
g_\al&=\int  | re_1 -y |^{\al-4}(r-y_1) d\si(y)   \\
 &=   s^{3-\al}\int  | sre_1 -y |^{\al-4}(sr-y_1)d\si_s(y).  \nn
\end{align}
Hence
\begin{align*}
\frac{\partial g_\al}{\partial \al} = &-s^{3-\al} (\log s) \int  | sre_1 -y |^{\al-4}(sr-y_1)d\si_s(y) \\
&+s^{3-\al} \int  | sre_1 -y |^{\al-4} (\log  | sre_1 -y | ) (sr-y_1) d\si_s(y).
\end{align*}
Observe the second integral is positive because our choice of $s=s(r)$ makes the sign of $sr-y_1$ and $\log  | sre_1 -y |$ coincide. Since the first integral vanishes precisely when $g_\al=0$,  we see that
\begin{align}\label{pode}
{g_\al \le 0} \ \text{ implies } \ \frac{\partial g_\al}{\partial \al} >0
\end{align}
which in turn shows
\begin{align}\label{boot}
g_{\al_0} \ge 0 \ \text{ implies }  \ g_\al >0 \text{ for all } \al > \al_0
\end{align}
to prove the claim. It remains to show $g_2(r) \ge 0$.
Let $\cH^{n-1} $ denote the area measure on the unit sphere in $\Rn$, so that $\omega_n d\si = d\cH^{n-1}|_{\bbS^{n-1}}$ where
\[
\omega_n := \cH^{n-1}[\bbS^{n-1}] (= \frac{2 \pi^{n/2}}{\Gamma(\frac n2)}).
\]
Expressing the Euclidean volume element $d^n x = r^{n-1} dr \prod_{i=1}^{n-1} (\sin \theta_i)^{i-1} d\theta_i$ in polar coordinates yields
\begin{align}
g_\al(r)&=\frac{1}{\omega_n} \int_{{\bf S}^{n-1}} | re_1 -y |^{\al-4}(r-y_1) d\cH^{n-1}(y) \nn \\
\label{spherical polars}&{=}\frac{\omega_{n-1}}{\omega_n} \int_0^\pi \frac{r-\cos \theta}{(r^2 - 2 r \cos\theta + 1)^{(4-\al)/2}} (\sin\theta)^{n-2} d\theta. 
\end{align}
In the special case $n=2=\al$ we claim the integral \eqref{spherical polars} vanishes.
Indeed,  exchanging the order of the integrals yields
\[
\frac{\omega_2}{\omega_1} \int_0^r g_2(s)ds 
= \int_0^\pi \log| r^2 - 2 r \cos\theta + 1| d\theta.
\]
The last integral  
is known to vanish, e.g. \cite[p 104]{SteinShakarchi03}.
Thus $g_2(r)=0$ for $|r|<1$ when $n=2$.

The fact that $g_2(r)>0$ when $n>2$ can be argued as follows.
Taking $\al=2$, the symmetry $\cos(\pi -\theta)=-\cos\theta$ reduces \eqref{spherical polars} to 
\begin{align}\label{spherical polar symmetry}
\frac{\omega_n}{\omega_{n-1}} g_2(r) 
&=2r  \int_0^{\pi/2} \frac{r^2+1 - 2 \cos^2 \theta}{(r^2+1)^2 - 4 r^2 \cos^2\theta} (\sin\theta)^{n-2} d\theta.
\end{align}
The
integrand in \eqref{spherical polar symmetry} changes sign only once,
at $\theta_r := \arccos\sqrt{\frac{r^2+1}{2}}$.  
Since the weight 
$\sin^{n-2}\theta$ 
(uniform when $n=2$) becomes an increasing function of $\theta \in [0,\frac \pi 2]$ for $n>2$,
it suppresses the contributions from the negative region $(0,\theta_r)$ and enhances the contributions
from the positive region $(\theta_r,\frac\pi 2)$ relative to the unweighted case.
 Thus positivity $g_2(r)>0$ of the weighted integral 
 for $n>2$  follows from the fact that the unweighted integral vanishes when $n=2$.

 Having established \eqref{gal} we turn now to \eqref{Gal}.
As before, it is clear that $G_\al(r) >0$ for all $r \ge 1$. So let us fix $r \in (0,1)$. By a similar scaling argument to 
\eqref{scaling}, \eqref{pode}, \eqref{boot}, we deduce
\begin{align}
G_{\al_0}(r) \ge 0 \ \text{ implies } \ G_\al(r) >0 \ \text{ for all } \ \al > \al_0.
\end{align}
Thus we only need to show $G_2(r) \ge 0$ (with equality when $n=2$).

Re-expressing \eqref{Gal} in spherical polar coordinates analogously to \eqref{spherical polars} 
and using  the symmetry $\cos(\pi -\theta)=-\cos\theta$ yields that, for $\al=2$,
\begin{align}\label{spherical polars 2}
{\frac{\omega_n}{\omega_{n-1}}
} 
G_2(r) 
&= \int_0^\pi \frac{r-\cos \theta} {(1 +r^2 -2r \cos\theta 
)^2} \sin^n \tta \, d\theta
\\&= 2r \int_0^{\pi/2} \frac{((1 + r^2)^2 - (2\cos \theta)^2){\sin^2 \tta}}{((1+r^2)^2 -  (2r\cos \theta)^2)^2} \sin^{n-2} \tta \, d\theta.
\label{spherical polars 3}
\end{align}
The integrand in \eqref{spherical polars 3} vanishes only once,   at $\Theta_r := \arccos \frac{1+r^2}{2}$.
Again the weight 
$\sin^{n-2}\theta$ is an increasing function of $\theta \in [0,\frac \pi 2]$ for $n>2$,
and suppresses the contributions from the negative region $(0,\Theta_r)$ and enhances the contributions
from the positive region $(\Theta_r,\frac\pi 2)$ relative to the case $n=2$.
 Thus $G_2(r) > 0$ for $n>2$ will follow once we have established $G_2(r)=0$ for $n=2$. For this, taking $n=2$ in \eqref{spherical polars 2} yields
\begin{align}\nonumber
{\frac{2\omega_2}{\omega_1}} \int_0^r G_2(s) ds 
&= 
- \int_0^\pi \left[\frac1{1 +s^2 -2s \cos\theta}\right]_{s=0}^r  \sin^2\theta \, d\theta
\\&= \frac \pi2 - \int_0^\pi \frac{\sin^2\theta \, d\theta}{1 +r^2 -2r \cos\theta}.
\label{cancel 1}
\end{align}
Now we change variables from $\tta$ to the angle $\phi$ between the horizontal axis and the vector from $re_1$ to $y=(\cos \tta, \sin \tta)$. Notice then $\phi > \tta$ and the last integrand above becomes $\sin^2 \phi$. By considering the triangle with vertices $\{0, y, re_1\}$, we find
\begin{align*}
 \frac{\sin(\phi - \tta)}{r} &= \frac{\sin\theta}{\sqrt{1+r^2 -2r \cos \theta}} = \sin \phi, \q \text{thus} 
\\\tta &= \phi - \sin^{-1}(r\sin\phi), \ \text{ and } 
\\  \frac{d\tta}{d\phi} &= 1 - \frac{ r\cos \phi}{\sqrt{1-r^2\sin^2 \phi}}.
\end{align*}
This yields
\begin{align}\nonumber
\int_0^\pi \frac{\sin^2\tta}{(r-\cos \tta)^2 + \sin^2 \tta} d\tta 
&= \int_0^\pi \sin^2 \phi \bigg(1 - \frac{ r\cos \phi}{\sqrt{1-r^2\sin^2 \phi}}\bigg) d\phi
\\&= \frac\pi 2
\label{cancel 2}
\end{align}
using antisymmetry of the ratio around $\phi=\frac\pi2$ again. Combining \eqref{cancel 1} with \eqref{cancel 2} gives the 
desired identity
$G_2(r)=0$ when $n=2$ and $|r|<1$ to complete the proof of the lemma. 
\end{proof}

\begin{proof}[Proof of Theorem \ref{T:main}]
For $\mu \in \cP_0(\R^n)$, define $V_\mu(x) := \int W_{\al,2}(x-y) d\mu(y)$. Then
\begin{align*}
V_\mu(x) &= \frac{1}{\al} \int |x-y|^\al d\mu(y) - \frac{1}{2}|x|^2  - \frac{1}{2}\Var(\mu),\ \text{and} \\
\nabla V_\mu(x) &= \int |x-y|^{\al - 2}(x-y) d\mu(y) - x. 
\end{align*}
For $ \al \in (\max\{4-n,2\},4)$, if $\mu$ is spherically symmetric and compactly supported,  we claim 
$f_\mu \in C_{loc}^{\lfloor \alpha\rfloor,\al-\lfloor \al \rfloor}(\R) {\cap} C^3_{loc}((0,\infty))$ (the {latter being the} space of three times continuously differentiable functions)
 and $f_\mu'''(r)>0$ for all $r>0$, 
 where 
\begin{align}
\label{f0} f_\mu(r) &:= V_\mu(re_1),  \\
\label{f1} f'_\mu(r) &= \nabla V_\mu(re_1) \cdot e_1 = \int |re_1 - y|^{\al-2}(r-y_1) d\mu(y) - r,  \\
\label{f2} {f''_\mu(r)} &= \int [(\al-2) \frac{(r -y_1)^2}{|re_1-y |^{4-\al}} + |r e_1 - y |^{\al-2} ] d\mu(y) -1, 
\end{align}
{and, if $\al \ge 3$ (or if $\mu=\sigma_1$ and $n \ge 2$) also}
\begin{align}
\label{f3} \frac{f'''_\mu(r)}{\al-2}
 &=\int \frac{r-y_1}{| re_1 -y |^{6-\al}} ( 3|re_1 - y|^2 +(\al-4) (r-y_1)^2) d\mu(y).
\end{align}

From these claims, we deduce $f_\mu(r)$ can have at most one inflection point $R$ in the range $r>0$, and must be convex on $r>R$ (since $\al>\bt$ and the compact support of $\mu$ ensures $f''_\mu(r)>0$ for large $r$).   The symmetry $f_\mu(r)=f_\mu(-r)$ and smoothness asserted imply $f'_\mu({0})=0$, 
so either $f_\mu(r)$ is strictly convex globally (in which case $R=0$) or strictly concave on $|r|<R$ and strictly convex on $r>R>0$.

It remains to establish the claims. To see $f_\mu \in {C^{\lfloor \alpha\rfloor,\al-\lfloor \al \rfloor}_{loc}}(\R)$ 
note that locally at least, the compact support of $\mu \in \cP_c^{ss}(\R^n)$ implies
$f_\mu$ is an average of uniformly $C^{\lfloor \alpha\rfloor,\al-\lfloor \al \rfloor}$ functions, 
justifying the formulas \eqref{f0}--\eqref{f3} for the first $\lfloor \al\rfloor$ derivatives. 
We now show
 $f_\mu'''(r)>0$ for all $r>0$ and also, for $2<\al<3$, that $f_\mu \in C^3_{loc}((0,\infty))$; 
 the latter has already been established for $3 \le \al$, as when $n=1$.
 We do this first  for the spherical shell $\mu=\si$ of unit radius,  before tackling the general case.
In dimension $n=1$, where $\si = \frac12(\delta_{-1} + \delta_{1})$ and $\al>3$ by hypothesis,
\[
\frac{2f_\sigma'''(r)}{(\al-1)(\al-2)}= |r-1|^{\al-4}(r-1) + |r+1|^{\al-4}(r+1)
\]
has the same sign as $r$, as desired.
To show $f_\sigma \in C^3_{loc}((0,\infty))$ for $2< \al < 3$ a more delicate argument is needed.
Note that both summands occurring in the integrand of $f_\mu'''(r)$ are dominated in absolute value by constant multiples of 
\[
|r e_1 - y|^{\al-3} \le |1- y_1^2|^{(\al-3)/2} \in L^1(\R^n, d\si).
\]
This allows $f_\sigma'''$ to be obtained by differentiating under the integral defining $f_\sigma''$ in the usual way:  approximating the derivatives by difference quotients and combining the mean value theorem with Lebesgue's dominated convergence theorem, noting $\al>2$; c.f. \cite[Theorem 2.27]{Folland99}.
Thus $f_{\sigma} \in C^3_{loc}((0,\infty))$ and its derivatives coincide with the integrals given above. 
Now $(r-y_1)^2 = |re_1 - y|^2 - 1 + y_1^2$ yields
\begin{equation}\label{f3}
\frac{f_\sigma'''(r)}{\al-2} = (\al-1) g_\al(r) + (4-\al) G_\al(r)>0,
\end{equation}
where $g_\al$ and $G_\al$ are the positive continuous functions from Lemma~\ref{positive}.

If instead $\mu=\si_R$ is the spherical shell of radius $R>0$, it follows that
$f_{\si_R}'''(r) = R^{\al-3} f_\si'''(r/R)$ is a positive continuous function of both variables $(r,R) \in (0,\infty)^2$. 
Along the $R\to 0$ boundary, Lebesgue's dominated convergence theorem shows 
\begin{align*}
 \frac{f_{\si_R}'''(r)}{(\al -2)(\al-1)} 
 &= \int  \frac{r-Ry_1}{| re_1 -Ry |^{4-\al}} \left( 1 +\frac{4-\al}{\al-1} \frac{1-y_1^2}{|re_1-Ry|^2}R^2\right) d\sigma_1(y)
\end{align*}
converges to ${\frac1{(\al-2)(\al-1)}} f'''_{\si_0}(r)=r^{{\al-}3}$. Thus $f'''_{\sigma_R}(r)$ depends continuously on $(r,R)$ in any compact rectangle $[r_0,r_1] \times [0,R_1]$ 
satisfying $r_0>0$; it admits a uniform modulus of continuity $\omega$ which depends on the rectangle.
Any spherically symmetric distribution $\mu \in \cP_c^{ss}(\Rn)$ supported on the centered ball of 
radius $R_1$ can be expressed as a weighted average of spherical shells $\sigma_R$ with $R \in[0,R_1]$. 
Therefore $f_\mu \in C^3([r_0,r_1])$ and its third derivative inherits both the desired positivity from $f'''_{\sigma_R}$ 
and the modulus of continuity $\omega$.
\end{proof}

\begin{proof}[Proof of Corollary \ref{C:abc}]
(a) {Set $f(r)=f(-r)=(W_{\al,\bt} * \mu)(r e_1)$. We first notice that the unique inflection point $R$ in Theorem \ref{T:main} is strictly positive, since if $R=0$ then $f$ is strictly convex on $\R$ and even, so the condition in (a) implies $\mu = \delta_0$, but then it is clear that $f''(0) = -1 <0$, a contradiction. Hence $R >0$. Now} since $f(r)$ is differentiable, strictly concave on $[0,R)$, strictly convex on $[R,\infty)$ and grows without bound as $r \to \infty$,  its derivative vanishes only at $r \in \{0,r_*\}$, where $r_*>R$ is its minimum.  
 Thus $\nabla (W_{\al,\bt} * \mu)(x)$
vanishes only at $|x| \in \{0,r_*\}$, so $\mu = (1-s) \sigma_{r_*} + s\delta_0$ for some $s \in [0,1]$.

(b {and e)} If the energy $\mathcal{E}_{W_{\al,\bt}}$ on a $d_\infty$-ball in {either $\cP_c(\Rn)$ or in $\cP^{ss}_c(\Rn)$} is minimized at the center $\mu$ of the ball,
then each point $x \in \spt \mu$ is a local minimum of $W_{\al,\bt}*\mu$ \cite{McCann06} \cite{BalagueCarrilloLaurentRaoul13}.
This can be thought of as a modified version of the Euler-Lagrange equation \eqref{Euler-Lagrange}, appropriate for $d_\infty$-local instead of global minima.
Then $\nabla (W_{\al,\bt} * \mu)=0$ holds $\mu$-a.e.
{If $n \ge 2$, Corollary \ref{radialmin} asserts spherical symmetry of $\mu$ after translation; the same conclusion is true by hypothesis in case (e).}
Now (a) implies 
$\mu = (1-s) \sigma_{r_*} + s\delta_0$ for some $s \in [0,1]$ and $r_*>0$.  Moreover $s=0$, since otherwise $x=0 \in \spt \mu$ is a strict local maximum of $W_{\al,\bt}*\mu$, as observed in the proof of (a).

We obtain the precise value of $r_*=R$ as follows: for a local minimum of 
\[
f_{\sigma_R}(r) = R^\al f_{\si_1}\left(\frac r R\right) + (R^\al-R^2)\left(\frac{r^2}{R^2}{+} 1 \right)
\]
to occur at $r=R$ requires 
\[
0=f'_{\sigma_R}(R) = R^{\al-1} f'_{\si_1}(1) + (R^\al-R^2)\frac{2}{R}.
\]
Since $r=0$ {does not minimize locally,} this implies 
$R =\big{(}\frac{2}{ f'_{\si_1}(1)+2} \big{)}^{\frac1{\al-2}}$.  

(c) The strict convexity of the energy shown in Corollary \ref{C:strictconvexity} below implies the minimizer $\mu$ is unique and spherically symmetric. It has compact support 
by Proposition \ref{exist}. The desired conclusion now follows from (b).

(d) In the remaining case $(\al,\bt,n)=(3,2,1)$, at least one minimizer is a spherical shell by continuity as $\al \searrow 3$, and a priori uniqueness of the minimizer completes the proof.
\end{proof}

\begin{remark}[Optimizers on the real line]
\label{1D references}
(i) Kang, Kim, Lim and Seo \cite[Theorem 2 (2),(6)]{KKLS21} showed for $(n,\bt)=(1,2)$ and $\al \ge 3$ that 
$\mu_*:= \frac{1}{2}(\delta_{- \frac{1}{2}} + \delta_{ \frac{1}{2}})$ is a strict local minimizer with respect to the $d_\infty$ metric on $\cP_{0}(\R)$. Our Corollary \ref{C:abc}  improves this by showing that $\mu_*$ is {the unique compactly supported $d_\infty$-local minimizer with even symmetry for all $\al > 3$, and} the unique global minimizer for all $\al \ge 3$. (ii) \cite[Theorem 2 (4)]{KKLS21}  showed in the range $(n,\bt)=(1,2)$ and $2< \al <3$
excluded by Corollary \ref{C:abc}, that $\mu_*$ is not a $d_\infty$-local minimizer, hence not a global minimizer.   
The global minimizer in this range has since been identified by Frank \cite{Frank22} {in response to \cite{DaviesLimMcCann21+b}}. 
(iii) \cite[Example 1, Theorem 2 (1)]{KKLS21} jointly show that e.g. if $(\al,\bt,n) =(2.4,2.1,1)$, $ \mu_*$ is a $d_\infty$-strict local minimizer but not a global minimizer. 
\end{remark}

\section{Asymptotic vs. Lyapunov stability of spherical shells}

Finally, let us clarify the sense in which these energy minimizers represent stable solutions to the aggregation equation \eqref{aggregation}.
Near a fixed point of a dynamical system,  there are several possible notions of nonlinear stability.  {\em Asymptotic stability}, 
requires the fixed point to attract all solutions in its neighbourhood, i.e. to form what is called an {\em attractor}.  {\em Lyapunov stability} is a weaker notion,  which merely requires 
that no point starting sufficiently close to the fixed point strays too far away: for any $\ep$-ball around the fixed point $x$ there should be an open ball $B_\delta(x)$ of initial conditions whose future trajectories remain in $B_\epsilon(x)$.  
Both notions admit obvious extensions to families of fixed points.  Moreover,
both 
notions are sensitive to the metric (or topology) in which closeness is measured, and to the class of initial conditions permitted.

For the Kantorovich-Rubinstein-Wasserstein family of distances 
\eqref{KRW metric},
energy minimizers need not form an {asymptotically} stable family:
in two-dimensions our next example shows that the
discrete ring solutions (whose stability was investigated linearly by Bertozzi, Kolokolnikov, Sun, Uminsky and von Brecht 
\cite{KolokolnikovSunUminskyBertozzi11} \cite{BertozziKolokolnikovSunUminskyVonBrecht15}
and nonlinearly by Simione \cite{Simione14})
provide non-minimizing steady states arbitrarily
close to the energy minimizing spherical shell. 

\begin{example}[Steady state periodic rings of many point masses]\label{E:ring}
Let $\al >  \bt>1$ and $n=2$. Assume for some $R>0$ that the spherical shell 
$\sigma_R$ is a steady-state for \eqref{aggregation},
so that $V_{\sigma_R} (x) = W_{\al,\bt} * \sigma_R (x)$ satisfies $\nabla V_{\sigma_R} = 0 \mbox{ on } \spt(\sigma_R)$. 
Then for any $\ep >0$, there exists a steady-state $\omega$ which takes the form of a discrete ring of uniformly spaced identical point masses \eqref{k-ring}
and satisfies $d_\infty(\omega, \sigma_R) < \ep$. 
\end{example}

\begin{proof} Let $g(r)= \nabla V_{\sigma_r}(re_1) \cdot e_1$. We claim $g'(r)>0$ whenever $g(r)=0$ and $r>0$. To see this, denote $y_1=e_1 \cdot y$  and compute
\begin{align*}
g(r)&=\int [ |re_1 - y|^{\al-2}(r-y_1) - |re_1 - y|^{\bt-2}(r-y_1) ] d\sigma_r(y)  \\
&=\int [|re_1 - ry|^{\al-2}(r-ry_1) - |re_1 - ry|^{\bt-2}(r-ry_1)] d\sigma_1(y) \\
&= r^{\al-1}\int |e_1 - y|^{\al-2}(1-y_1) d\sigma_1(y) -  r^{\bt-1}\int |e_1 - y|^{\bt-2}(1-y_1) d\sigma_1(y) \\
&=c_\al r^{\al-1} - c_\bt r^{\bt-1}
\end{align*}
where $c_\al = \int |e_1 - y|^{\al-2}(1-y_1) d\sigma_1(y)$ and $c_\bt$ similarly. 
Now $g(R)=0$ gives $c_\bt R^{\bt-2} = c_\al R^{\al-2}$, thus $g'(R) = c_\al R^{\al-2} (\al - \bt) > 0$, as claimed. Let 
\be\label{k-ring}
\omega_{k,r} = \frac1k\sum_{m=1}^k \delta_{re^{2\pi i m/k}}
\ee 
denote the discrete ring supported on $k$ points spaced uniformly around the circle of radius $r$. 
We assume $re_1 \in \spt (\omega_{k,r})$.
  Now $g(R)=0$ by assumption, thus given $\eta >0$, the claim allows us to find $R^+ \in (R, R+\eta)$, $R^- \in (R-\eta, R)$ such that $g(R^+) >0$, $g(R^-) < 0$. Hence by approximation, for all large enough $k$ we have
\be
 \nabla V_{\omega_{k,R^+}}(R^+e_1) \cdot e_1 >0, \q  \nabla V_{\omega_{k,R^-}}(R^-e_1) \cdot e_1 <0. \nn
\ee
By continuity, there exists $R^* \in (R^-, R^+)$ so that $ \nabla V_{\omega_{k,R^*}}(R^* e_1) \cdot e_1 =0$. By symmetry $ \nabla V_{\omega_{k,R^*}}(R^* e_1)$ is parallel to $e_1$, hence $ \nabla V_{\omega_{k,R^*}}(R^* e_1) = 0$. By symmetry again $ \nabla V_{\omega_{k,R^*}}$ vanishes on $\spt(\omega_{k,R^*})$, that is $\omega_{k,R^*}$ is a steady-state. Now $d_\infty(\sigma_r, \omega_{k,r}) \to 0$ {uniformly on $r \in [R-\epsilon,R+\epsilon]$} as $k \to \infty$. 
Thus for $\epsilon>0$, we choose $\eta  < \epsilon/2$, $k$ large enough and $R^* \in (R-\eta,R+\eta)$ as above to ensure
 \[
 d_\infty(\sigma_R,\omega_{k,R^*}) {\le d_\infty(\sigma_R,\sigma_{R^*}) + d_\infty(\sigma_{R^*},\omega_{k,R^*})}
 <\ep
 \]
  as desired. 
\end{proof}

\begin{remark}[Minimizers need not be asymptotically stable]
By the Euler-Lagrange equation \eqref{Euler-Lagrange}, this example implies there exists a steady-state discrete ring which is $d_p$-arbitrarily close to the spherical shell minimizer given in Corollary \ref{C:abc} for each $p\in[1,\infty]$.  This nearby accumulation of non-minimizing steady-states shows that even though the energy \eqref{energy} is a Lyapunov function for the aggregation dynamics \eqref{aggregation},  the spherical shells which minimize it are not asymptotically stable.
 This is in sharp contradistinction to the results of Simione{. In particular, \cite[Theorems 16 and 25]{Simione14} asserts that if a finitely supported steady state is fully linearly stable, then it is both $d_\infty$-Lyapunov stable, and
 $(d_\infty; d_2)$-asymptotically stable, in the sense that probability measures which start $d_\infty$-close to the steady state contract to the steady state in the $d_2$-metric under the aggregation dynamics \eqref{aggregation}. In view of Corollary \ref{radialmin} however,  the discrete ring solutions of the previous example are not fully linearly stable when $\bt=2<\al<4$.
 Likewise, our results differ from those} of 
 Balagu\'e et al \cite{BalagueCarrilloLaurentRaoul13}, which show local $d_p$-asymptotic stability of our spherical shell
 for some $p\ge 1$ in the more restricted class  $\cP_c^{ss}(\Rn)$
  of spherically symmetric initial data. 
\end{remark}

 Instead, the minimizing family is {\em $d_\al$-Lyapunov} stable,  in the sense that the evolution stays as $d_\al$-close as we please to 
the minimizing family  if it starts $d_\al$-close enough to it. 
Recall the following variant of a 
well-known lemma,  {in which {\em coercivity of} $E:X \longrightarrow \R$ means $E^{-1}((-\infty,h])$ is assumed to be compact for each $h \in \R$.}

\begin{lemma}[Lyapunov stability]\label{L:Lyapunov}
If $E:X \longrightarrow \R$ is a continuous coercive function on a metric space $(X,d)$ and $Y\subset X$, then for each $\ep>0$ there exist $\de>0$ and $h\in \R$ such that
\be\label{energy levels}
(\argmin_X E)^\de \subset E^{-1}((-\infty,h)) \subset (\argmin_X E)^\ep,
\ee
where
\[
Y^\ep := \{ x \in X \mid d(x,Y) := \inf_{y \in Y} d(x,y) < \ep\}.
\]
\end{lemma}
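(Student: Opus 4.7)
The plan is to establish the two inclusions separately, in each case leveraging coercivity of $E$ through compactness of sublevel sets. Let $m := \inf_X E$ and $Y^* := \argmin_X E = E^{-1}(\{m\})$. I would first observe that $Y^*$ is nonempty and compact: any minimizing sequence eventually lies in the compact sublevel set $E^{-1}((-\infty, m+1])$, and a cluster point $x_\infty$ satisfies $E(x_\infty)=m$ by continuity, so $Y^* \ne \emptyset$; as a closed subset (preimage of a point under a continuous map) of this compact set, $Y^*$ is compact.

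For the right-hand inclusion, given $\ep > 0$, I would argue by contradiction that some $h > m$ satisfies $E^{-1}((-\infty, h)) \subset (Y^*)^\ep$. The complement $K := X \setminus (Y^*)^\ep$ is closed, since $(Y^*)^\ep$ is a union of open balls hence open. If no such $h$ existed, one could pick $h_n \searrow m$ and $x_n \in K$ with $E(x_n) < h_n$, so that $E(x_n) \to m$. For large $n$ these $x_n$ all lie in the compact set $E^{-1}((-\infty, m+1])$, and a subsequential limit $x_\infty$ belongs to $K$ by closedness and satisfies $E(x_\infty)=m$ by continuity, placing $x_\infty \in Y^* \cap K = \emptyset$ --- a contradiction. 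Hence some finite $h > m$ does the job.

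For the left-hand inclusion, with $h$ as above I would invoke the standard fact that a compact subset of an open set in a metric space admits a uniform open thickening still contained in the open set. Concretely, $E^{-1}((-\infty, h))$ is open by continuity of $E$ and contains $Y^*$. The distance function $\varphi(x) := d\bigl(x,\, X \setminus E^{-1}((-\infty, h))\bigr)$ is continuous and strictly positive on $Y^*$ (if the complement is empty, any $\de$ works), so it attains a positive minimum $\de > 0$ on the compact set $Y^*$. By the triangle inequality, any $x$ with $d(x, Y^*) < \de$ satisfies $\varphi(y) \le d(y,x) < \de$ for some $y \in Y^*$ --- impossible unless $x \in E^{-1}((-\infty, h))$. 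Thus $(Y^*)^\de \subset E^{-1}((-\infty, h))$, chaining with the previous paragraph to complete the proof.

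I do not anticipate a serious obstacle; the only point requiring care is the precise use of coercivity, which enters solely to promote a minimizing sequence in the closed ``bad'' set $K$ to a cluster point, thereby forcing the contradiction that delivers $h$. The remaining ingredients --- openness of sublevel sets under continuity, compactness of $Y^*$, and the triangle inequality --- are standard.
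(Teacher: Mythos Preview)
Your proof is correct and follows essentially the same approach as the paper: a contradiction argument using coercivity to obtain the second inclusion, followed by the positive-distance-from-compact-to-closed-complement argument for the first. The only differences are presentational --- you make the compactness of $Y^*$ explicit up front, whereas the paper invokes it implicitly when asserting the distance from $A$ to $E^{-1}([h,\infty))$ is positive.
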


\begin{proof}
Continuity and coercivity imply $E$ attains its minimum on $X$. Let $A=\argmin_X E$ denote the set of minimizers and 
$e'= \min_X E$.
Given $\ep>0$, to derive a contradiction suppose no $h>e'$ satisfies the second inclusion \eqref{energy levels}.
Then for each $k \in \bbN$ there exists $x_k \in X\setminus A^\ep$ with $E(x_k)<e' + 1/k$.  Coercivity yields an accumulation point
$x_\infty$ of $\{x_k\}_{k\in \bbN}$,  which must lie in the closed set $X \setminus A^\ep$.  Continuity of $E$ yields $E(x_\infty)= e'$,
hence $x \in A$ --- the desired contradiction.  We now provide $\de$ satisfying the first inclusion:  since continuity of $E$ yields an open set $E^{-1}((-\infty,h))$ containing
the compact set $A$,  the distance of $A$ to the closed set $E^{-1}([h,\infty))$ is positive; taking $\delta$ to be this positive distance  establishes the lemma.
\end{proof}

To apply the lemma,  we take $X$ to consist of the centered measures with finite $\al$-th moments:
\begin{equation}\label{P_a}
\cP_{\al,0}(\Rn) := \{\mu \in \cP_0(\Rn) \mid \int_{\Rn} |x|^\al d\mu(x)<\infty  \}. 
\end{equation}

\begin{corollary} [$d_\al$-Lyapunov stability]
For $0< \bt < \al <\infty$  and $\al \ge 1$, taking $E=\cE_{W_{\al,\bt}}$ and $(X,d)=(\cP_{\al,0}(\Rn),d_\al)$, 
the preceding lemma ensures that any curve $(\mu(t))_{t\ge 0} \in X$ starting within distance $\delta>0$ of an energy minimizer
remains within distance $\ep>0$ of an energy minimizer as long as $E(\mu(t)) \le E(\mu(0))$ for all $t \ge 0$.
\end{corollary}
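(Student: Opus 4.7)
The plan is to verify the hypotheses of the preceding lemma for $E=\cE_{W_{\al,\bt}}$ on $(X,d)=(\cP_{\al,0}(\Rn),d_\al)$, after which the stated conclusion follows mechanically: given $\ep>0$ the lemma produces $\delta>0$ and $h>\inf_X E$ with $A^\delta\subset\{E<h\}\subset A^\ep$ where $A=\argmin_X E$; then $\mu(0)\in A^\delta$ gives $E(\mu(0))<h$, and the Lyapunov hypothesis $E(\mu(t))\le E(\mu(0))$ traps $\mu(t)\in\{E<h\}\subset A^\ep$ for all $t\ge 0$. Non-emptiness and $d_\al$-compactness of $A$ both follow from Proposition~\ref{exist}, which confines each minimizer to $\overline{B_{e^{1/\bt}}}$, a compact set on which the weak and $d_\al$ topologies coincide.

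For continuity of $E$, I would use the characterization of $d_\al$-convergence as weak convergence together with convergence of the $\al$-th moment. Since $\bt\le\al$ yields $|W_{\al,\bt}(x-y)|\le C(1+|x|^\al+|y|^\al)$, uniform integrability of $|x|^\al$ in $\mu_k$ (implied by moment convergence) lifts to uniform integrability of $|x-y|^\al$ in the product measure, so Vitali's convergence theorem gives $\cE_W(\mu_k)\to\cE_W(\mu)$. For the a priori part of coercivity, set $I_\gamma(\mu):=\iint|x-y|^\gamma d\mu\,d\mu$; Jensen on the product measure yields $I_\bt(\mu)\le I_\al(\mu)^{\bt/\al}$, so
\[
E(\mu)\;\ge\;\frac{I_\al(\mu)}{2\al}-\frac{I_\al(\mu)^{\bt/\al}}{2\bt},
\]
which tends to $+\infty$ as $I_\al(\mu)\to\infty$ since $\bt<\al$. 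A further Jensen in the inner variable gives $\int|x|^\al d\mu\le I_\al(\mu)$ for centered $\mu$, so any $E$-bounded sequence has uniformly bounded $\al$-moment and is tight, with a weakly convergent subsequence $\mu_k\to\mu_\infty$. Weak lower semicontinuity of $E$ on bounded-$I_\al$ sets (on which the repulsive term $I_\bt$ is in fact weakly continuous, by uniform integrability of $|x-y|^\bt$ coming from $\bt<\al$) forces $\mu_\infty$ to be a minimizer whenever $\mu_k$ is a minimizing sequence, and Proposition~\ref{exist} then places $\mu_\infty$ in $\overline{B_{e^{1/\bt}}}$.

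The main obstacle is upgrading this weak convergence to $d_\al$-convergence --- this is the form of compactness the lemma's proof actually invokes, and it amounts to ruling out escape of $\al$-moment to infinity. My strategy is a concentration-compactness decomposition: fix $R>e^{1/\bt}$ and split $\mu_k=\lambda_k+\nu_k$ with $\nu_k:=\mu_k|_{B_R^c}$. Since $\mu_\infty(\{|x|\ge R\})=0$, weak convergence forces $p_k:=\nu_k(\Rn)\to 0$, while a hypothetical failure of uniform integrability would yield $\ep_0>0$ with $\int|x|^\al d\nu_k\ge\ep_0$ along a subsequence. Applying Jensen in the inner variable to obtain $\int|x-y|^\al d\lambda_k(x)\gtrsim(1-p_k)|y|^\al$ for $|y|>R$ (the barycenter of $\lambda_k$ tending to zero because $\mu_k$ is centered and $p_k\to 0$), together with dominance of the attractive term over the repulsive one for large $|y|$, the cross-interaction $\iint W_{\al,\bt}\,d\lambda_k\,d\nu_k$ is bounded below by a positive constant times $\ep_0$. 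Combined with the trivial lower bounds $E(\lambda_k)\ge(1-p_k)^2\inf_X E$ and $E(\nu_k)\ge p_k^2\inf_X E$ (obtained by renormalizing to probability measures and using translation invariance to equate $\inf_\cP E$ with $\inf_X E$), this produces $\liminf E(\mu_k)\ge\inf_X E+c\,\ep_0>\inf_X E$, contradicting the minimizing property and establishing the compactness required by the lemma.
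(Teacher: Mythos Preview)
Your argument is sound and takes a different route from the paper on the key compactness step. The paper dispatches continuity by citing \cite[Theorem~7.12]{Villani03}, derives the same Jensen-based moment bound on sublevel sets that you do, and then asserts full coercivity --- $d_\al$-compactness of every sublevel set $E^{-1}((-\infty,h])$ --- by invoking a result from \cite{AmbrosioGigli13} to the effect that closed bounded subsets of $\cP_\al(\Rn)$ are $d_\al$-compact. You instead prove continuity directly via Vitali and then, rather than claiming compactness of all sublevel sets, run a concentration-compactness argument tailored to \emph{minimizing} sequences. This is more laborious but also more careful: note that $d_\al$-closed bounded sequences in $\cP_{\al,0}(\Rn)$ need not admit $d_\al$-convergent subsequences (e.g.\ $\mu_k=(1-\tfrac2k)\delta_0+\tfrac1k\delta_{k^{1/\al}e_1}+\tfrac1k\delta_{-k^{1/\al}e_1}$), so your caution about upgrading weak to $d_\al$ convergence is well placed. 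Since the Lemma's proof only ever extracts accumulation points from minimizing sequences and separately uses compactness of $A=\argmin E$ (which you get from Proposition~\ref{exist}), your weaker verification suffices; you hint at this (``the form of compactness the lemma's proof actually invokes'') but should state it plainly rather than claim to have verified the Lemma's literal hypothesis. One detail to tighten: the fixed cutoff $R>e^{1/\bt}$ ensures $p_k\to0$, but the cross-interaction lower bound needs the attractive term to dominate the repulsive one, which only holds for $|y|$ large; since $p_k\to0$ while $\int|y|^\al\,d\nu_k\ge\ep_0$, the $\nu_k$-mass is forced to infinity and an auxiliary radius $R'\gg R$ disposes of the bounded-$|y|$ contribution --- spell this out.
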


\begin{proof}
Let $d(x,y)=|x-y|$. Jensen's inequality, $|| d ||_{L^\bt(\mu \otimes \mu)} \le || d ||_{L^\al(\mu \otimes \mu)}$, converts the energy bound
\begin{align*}
h &\ge  \cE_{W_{\al,\bt}}(\mu)
\\ &= \frac1\al \|d\|^\al_{L^\al(\mu \otimes \mu)} - \frac1\bt \|d\|^\bt_{L^\bt(\mu \otimes \mu)}
\\ &\ge  \frac1\al \|d\|^\al_{L^\al(\mu \otimes \mu)} - \frac1\bt \|d\|^\bt_{L^\al(\mu \otimes \mu)}
\end{align*}
into a bound on $\|d\|_{L^\al(\mu \otimes \mu)}$.  For $\mu \in \cP_0(\Rn)$,  a second application of Jensen's inequality
\begin{align*}
\|d\|^\al_{L^\al(\mu \otimes \mu)}
 &= \iint_{\Rn\times \Rn} |x-y|^\al d\mu(x) d\mu(y)
 \\& \ge \int_\Rn |x|^\al d\mu(x) 
 \\&= d_\al(\mu, \delta_0)^\al
\end{align*}
then shows $E^{-1}((-\infty,h])$ is $d_\al$-bounded.
Now \cite[Theorem 7.12]{Villani03} shows $d_\al$-continuity of $\cE_{W_{\al,\bt}}$ on \eqref{P_a},
so $E^{-1}((-\infty,h])$ is also $d_\al$-closed. Finally, \cite[Theorem 2.7]{AmbrosioGigli13} asserts closed and bounded subsets of $\cP_2(\Rn)$ are $d_2$-compact,
but for $\al \in [1,\infty)$ the same proof yields $d_\al$-compactness of closed bounded subsets of $\cP_\al(\Rn)$ with respect to the distance 
\eqref{KRW metric}. This establishes the desired coercivity of $E$ on $(X,d)$.
\end{proof}

\begin{remark} [Lyapunov stability of aggregation near energy minima]
Since the aggregation equation \eqref{aggregation} preserves center of mass without increasing the energy \eqref{energy}, 
the last corollary asserts the desired Lyapunov stability result.
To obtain this stability, our distance $d_\al$ is adapted to match the largest exponent in the interaction potential $W=W_{\al,\bt}$.
Note that we need not specify a solution concept for the dynamics, so long as it preserves (sign, mass, center of mass) and dissipates energy. 
\end{remark}

\begin{appendices}
\section{Appendix: Strict convex/concavity of $\mathcal{E}_{W_\al}$ for $\al \in (2,4) \cup (0,2)$}

\label{S:Strict convexity}

We end by recalling that the strict convexity proved by Lopes \cite{L19} for the functional $\mathcal{E}_{W_\alpha}$ 
extends to singular measures.   After posting this manuscript on the arXiv,  we learned from Rupert Frank that this extension was previously established in the course of proving uniqueness of energy minimizer up to transition for an interacting gas model satisfying a polytropic equation of state: Theorem 27 of \cite{CarrilloDelgadinoDolbeaultFrankHoffman19}.
 We nevertheless include our own proof, which differs in some ways from that of \cite{CarrilloDelgadinoDolbeaultFrankHoffman19}.

Convexity of the quadratic form \eqref{energy} is equivalent to non-negative
definiteness of its kernel $W$. For the kernels $\{W_\alpha\}_{0<\al<4}$,
Lopes \cite{L19} explored this sign definiteness 
using Fourier transforms.  Although
he was only interested in the action of such kernels on 
probability densities,  we now show his considerations extend also to singular probability measures, while retaining strict convexity. The Fourier transform of
a signed measure is defined by
\be\label{Fourier}
\hat \rho(\xi) :=\int_{\R^n} e^{-2\pi i \xi \cdot x} d\rho(x). 
\ee
For bounded densities $\rho$ and kernels $W$ of compact support,  Plancherel's formula 
\begin{equation}\label{Parseval}
\langle\rho, W*\rho\rangle_{L^2(\R^n)} 
= \langle \hat \rho, \hat W \hat \rho\rangle_{L^2(\Rn)}
\end{equation}
shows the sign of $\hat W$ determines the sign-definiteness of the quadratic form.  For singular measures and long range, unbounded kernels,  things are potentially more delicate.
From Euler's product representation
\begin{equation}\label{Euler product}
\Gamma(z) = \frac{1}{z} \prod_{n=1}^\infty \frac{(1+\frac1 n )^z}{1+\frac z n}
\end{equation}
recall $\Gamma(z)$ is analytic except at the negative integers, where it has simple poles (and where its restriction to the real axis therefore changes signs). {Let $\mathcal{P}_c(\R^n)$ denote the set of compactly supported probability measures.}
We extend Lopes' result with the following
analog of \eqref{Parseval}:

\begin{proposition}[Sign of kernel action on centered neutral measures]
\label{P:Lopes Formula}
If $\rho=\mu-\nu$ is the difference of $\mu,\nu\in \mathcal{P}_{0}(\R^n) \cap \mathcal{P}_{c}(\R^n)$, then 
\begin{align}\label{Fal}
F_\alpha(\rho)
:&=\iint_{\R^n\times \R^n} |x-y|^\alpha d\rho(x)d\rho(y)
\\&= C(\alpha)\int_{\R^n}|\xi|^{-\alpha-n}|\hat{\rho}(\xi)|^2d\xi =:\tilde F_\alpha(\hat\rho)
\end{align}
for each $\alpha\in(0,2)\cup(2,4)$, where $C(\alpha):=2^{\alpha+n/2}\frac{\Gamma((\alpha+n)/2)}{\Gamma(-\alpha/2)}$.
\end{proposition}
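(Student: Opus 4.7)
The plan is to represent $|x|^\al$ as a Bernstein-type integral of Gaussians, whose double-$\rho$ integral then evaluates to an inner product between a Gaussian and $\rho\ast\check\rho$ and in turn to a spectral expression via Parseval, exploiting the vanishing of the first two Taylor coefficients of $\hat\rho$ at the origin to absorb all inconvenient correction terms.

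Concretely, I would use the identity
\begin{align*}
|x|^\al \;=\; \frac{-1}{\Gamma(-\al/2)} \int_0^\infty T_\al(t,x)\, t^{-1-\al/2}\, dt,
\end{align*}
where $T_\al(t,x)=1-e^{-t|x|^2}$ for $\al\in(0,2)$ and $T_\al(t,x)=1-e^{-t|x|^2}-t|x|^2$ for $\al\in(2,4)$; the quadratic correction in the upper range is required for integrability at $t=0$, and both formulas follow by setting $u=t|x|^2$, integrating by parts once or twice, and computing the resulting universal $u$-integral to be $-\Gamma(-\al/2)$.

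Replacing $x$ by $x-y$ and integrating against $d\rho(x)d\rho(y)$, the polynomial correction terms drop out: as $\mu,\nu$ are centered probability measures, $\rho=\mu-\nu$ obeys $\iint 1\,d\rho\otimes d\rho=0$ (neutrality) and $\iint|x-y|^2\,d\rho\otimes d\rho=0$ (centering, on expanding the square), so that in both ranges
\begin{align*}
F_\al(\rho) \;=\; \frac{1}{\Gamma(-\al/2)} \int_0^\infty t^{-1-\al/2} \iint e^{-t|x-y|^2}\, d\rho(x)d\rho(y)\, dt.
\end{align*}
Writing $\check\rho$ for the pushforward of $\rho$ under $x\mapsto -x$, the compactly supported finite signed measure $\rho\ast\check\rho$ has bounded continuous Fourier transform $|\hat\rho(\xi)|^2$; pairing it against the Schwartz function $e^{-t|\cdot|^2}$, whose Fourier transform in the convention \eqref{Fourier} equals $(\pi/t)^{n/2}e^{-\pi^2|\xi|^2/t}$, a Fubini argument yields
\begin{align*}
\iint e^{-t|x-y|^2}\, d\rho(x)d\rho(y) \;=\; (\pi/t)^{n/2}\int_{\R^n} e^{-\pi^2|\xi|^2/t}\, |\hat\rho(\xi)|^2\, d\xi.
\end{align*}
Swapping the $t$- and $\xi$-integrations by Tonelli and computing the inner $t$-integral as a Gamma function then produces the claimed spectral identity, the absolute constant matching $C(\al)$ up to Fourier normalization.

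The main obstacle is ensuring the right-hand integral is finite across both ranges. Neutrality and centering force $\hat\rho(0)=0$ and $\nabla\hat\rho(0)=0$; since $\hat\rho$ is real-analytic (being the Fourier transform of a compactly supported distribution), we obtain $|\hat\rho(\xi)|^2=O(|\xi|^4)$ as $\xi\to 0$, rendering $|\xi|^{-\al-n}|\hat\rho|^2$ integrable near the origin precisely for $\al<4$; at infinity, $|\hat\rho|^2$ is bounded while $|\xi|^{-\al-n}$ is integrable for $\al>0$. The degeneracy at $\al=2$, where both $C(\al)$ and the Bernstein kernel change type, is exactly what the hypothesis $\al\in(0,2)\cup(2,4)$ excludes; the extra quadratic correction needed in the Bernstein representation for the upper range is precisely what the centering silently absorbs, unifying the two cases.
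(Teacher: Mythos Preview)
Your argument is correct and takes a genuinely different route from the paper's. The paper does not derive the identity from scratch: it invokes Lopes' theorem, which already establishes the formula for $\mu,\nu\in L^1$, and then extends to singular measures by mollifying $\rho_\ep=\varphi_\ep*\rho$ and passing to the limit on each side separately --- $d_\infty$-convergence on the spatial side, and dominated convergence on the spectral side after splitting $\R^n$ into $B_1(0)$ and its complement, where Paley--Wiener analyticity together with $\hat\rho_\ep(0)=\nabla\hat\rho_\ep(0)=0$ provides a uniform bound on $|\hat\rho_\ep(\xi)|/|\xi|^2$ near the origin. Your approach is instead self-contained: the Bernstein representation of $|x|^\alpha$ reduces everything to Gaussians, whose Fourier transforms are explicit, and Tonelli plus a Gamma-function computation finishes the job. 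The moment conditions on $\rho$ play a more active role for you --- they kill the polynomial corrections in $T_\alpha$, unifying the two ranges --- whereas in the paper they serve only to guarantee integrability of $|\xi|^{-\alpha-n}|\hat\rho|^2$ at the origin (a step you also need). What your route buys is independence from Lopes' paper and a transparent origin for the constant; what the paper's route buys is brevity and a reusable density-to-measure template. One small caveat: with the Fourier convention \eqref{Fourier}, your computation produces the prefactor $\pi^{-\alpha-n/2}\Gamma((\alpha+n)/2)/\Gamma(-\alpha/2)$ rather than the stated $2^{\alpha+n/2}\Gamma((\alpha+n)/2)/\Gamma(-\alpha/2)$; the discrepancy presumably reflects a different normalization in Lopes' original work, and in any case only the sign of $C(\alpha)$ is used downstream.
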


We shall derive this from Lopes' results using approximation.
\begin{proof}
Lopes shows this result on $\mathcal{P}_0(\R^n)\cap \mathcal{P}_c(\R^n)\cap L^1(\R^n)$, for the given range of $\alpha$. We shall extend it to $\mathcal{P}_0(\R^n)\cap \mathcal{P}_c(\R^n)$. Let $\mu,\nu \in \mathcal{P}_{0}(\R^n) \cap \mathcal{P}_{c}(\R^n)$. Choose a smooth radial density $\varphi \in \mathcal{P}_0(\R^n)\cap \mathcal{P}_c(\R^n)$ supported in a unit ball, and consider the mollified measures $(\mu_\ep)_{\ep > 0}$, $(\nu_\ep)_{\ep > 0}$ defined by $d\mu_\ep(x)=(\varphi_{\ep}*\mu)(x)dx$, where $\varphi_{\ep}(x):=\frac{1}{\ep^n}\varphi(\frac{x}{\ep})$ and $\varphi$ is the usual smooth probability density compactly supported on the unit ball. It is then easy to check $\mu_{\ep}, \nu_\ep \in \mathcal{P}_0(\R^n)\cap C_c^\infty(\R^n)$, and the functions $\mu_\ep, \nu_\ep$ are uniformly supported in a ball of radius $R$ for all $\ep \in (0,1]$. Moreover,  $d_\infty(\mu_\ep, \mu), d_\infty(\nu_\ep, \nu) \le \ep$ hence $\mu_\ep \to \mu$, $\nu_\ep \to \nu$ as $\ep \to 0$ in $\infty$-Wasserstein distance \eqref{KRW metric} on $\cP(\Rn)$. Thanks to this $d_\infty$-convergence, we obtain $F_\alpha(\rho) = \lim_{\ep \to 0} F_\alpha(\rho_\ep)$ for any $\al > 0$.

We next show that $\tilde F_\alpha(\hat\rho_\ep)\to\tilde F_\alpha(\hat\rho)$. We split the integral as follows:
\[
\frac{\tilde F_\alpha(\hat{\rho}_\ep)}{C(\alpha)}=\int_{B_1(0)}|\xi|^{-\al-n}|\hat{\rho_\ep}(\xi)|^2d\xi+\int_{\R^n\setminus B_1(0)}|\xi|^{-\al-n}|\hat{\rho_\ep}(\xi)|^2d\xi.
\]
We show each integral converges as $\ep \to 0$. By Schwartz's Paley-Wiener theorem \cite{Schwartz52} 
 for distributions, $\hat{\rho}_\ep$ is analytic for all $\ep \ge 0$. Since vanishing zeroth and first moments imply $\hat{\rho}_\ep(0)=\int d\rho_\ep(x)=0$
and $(\nabla_\xi\hat{\rho}_\ep)(0)=0$, we find $\frac{\hat{\rho_\ep}(\xi)}{|\xi|^2}$ is also analytic. Then the power series expansion at the origin implies  $\frac{\hat{\rho_\ep}(\xi)}{|\xi|^2}$ is uniformly bounded in $B_1(0)$ for all $\ep \in [0,1]$, since all mixed partial derivatives of order $k$ of $\hat{\mu_\ep}$, $\hat{\nu_\ep}$  at $0$ are bounded by $R^k$ by the basic property of Fourier transforms \eqref{Fourier}. Hence

\begin{align*}
\int_{B_1(0)}|\xi|^{-\al-n}|\hat{\rho_\ep}(\xi)|^2d\xi
=&\int_{B_1(0)}|\xi|^{-\al-n+4}\left(\frac{|\hat{\rho_\ep}(\xi)|}{|\xi|^2}\right)^2d\xi
\\\xrightarrow{\ep\to 0} & \int_{B_1(0)}|\xi|^{-\al-n+4}\left(\frac{|\hat{\rho}(\xi)|}{|\xi|^2}\right)^2d\xi
\\=&\int_{B_1(0)}|\xi|^{-\al-n}|\hat{\rho}(\xi)|^2d\xi
\end{align*}
since $\al <4$ and pointwise convergence of $\hat{\rho_\ep}$ to $\hat{\rho}$, proving the convergence of the first integral by Lebesgue Dominated Convergence Theorem. Next, since $|\hat\rho_{\ep}(\xi)|\leq 2$ for any $\xi$ and  $|\xi|^{-\alpha-n}\in L^1(\R^n\setminus B_1(0))$, we similarly have 
\[
\int_{\R^n\setminus B_1(0)}|\xi|^{-\al-n}|\hat{\rho_\ep}(\xi)|^2d\xi 
\xrightarrow{\ep\to 0} \int_{\R^n\setminus B_1(0)}|\xi|^{-\al-n}|\hat{\rho}(\xi)|^2d\xi.
\]
Summing up we deduce $\lim_{\ep\to 0} \tilde{F}_\al(\hat\rho_\ep)=\tilde{F}_\al(\hat\rho)$, and thereby obtain
\[
F_\al(\rho)=\lim_{\ep\to 0} F_\al(\rho_\ep)=\lim_{\ep\to 0} \tilde{F}_\al(\hat\rho_\ep)=\tilde{F}_\al(\hat\rho)
\]
where the second equality is due to Lopes \cite{L19}.
\end{proof}

\begin{corollary}[Energetic convexity for singular measures]
\label{C:strictconvexity} 
On $\cP_0(\R^n) \cap \cP_c(\R^n)$, $\mathcal{E}_{\Wa}$ is strictly convex if $2 < \al < 4$, and is strictly concave if $0 < \al < 2$. In addition, $\mathcal{E}_{\Wa}$ is convex if $\al=4$, and is linear if $\al=2$.
\end{corollary}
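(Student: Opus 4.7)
The plan is to interpret (strict) convexity/concavity of $\mathcal{E}_{W_\alpha}$ as (strict) positivity/negativity of a quadratic second variation, identify that variation with the form $F_\alpha(\rho)$ from Proposition~\ref{P:Lopes Formula}, and then determine its sign. Fix $\mu_0, \mu_1 \in \cP_0(\R^n) \cap \cP_c(\R^n)$ and set $\rho := \mu_1 - \mu_0$ and $\mu_t := (1-t)\mu_0 + t\mu_1$; each $\mu_t$ lies in $\cP_0(\R^n)\cap \cP_c(\R^n)$ since centering and compactness of support are preserved by convex combinations. A direct bilinear expansion yields
\begin{equation*}
2\alpha\,\mathcal{E}_{W_\alpha}(\mu_t) = \iint |x-y|^\alpha d\mu_0 d\mu_0 + 2t \iint |x-y|^\alpha d\mu_0 d\rho + t^2 F_\alpha(\rho),
\end{equation*}
so that $(1-t)\mathcal{E}_{W_\alpha}(\mu_0) + t\mathcal{E}_{W_\alpha}(\mu_1) - \mathcal{E}_{W_\alpha}(\mu_t) = \frac{t(1-t)}{2\alpha} F_\alpha(\rho)$. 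Hence strict convexity on $\cP_0 \cap \cP_c$ is equivalent to $F_\alpha(\rho) > 0$ for every admissible $\rho \ne 0$, and strict concavity to the reverse inequality.

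For $\alpha \in (0,2) \cup (2,4)$, Proposition~\ref{P:Lopes Formula} applies to the compactly supported signed measure $\rho$ (which has zero total mass and zero first moment), giving
\begin{equation*}
F_\alpha(\rho) = C(\alpha) \int_{\R^n} |\xi|^{-\alpha-n} |\hat\rho(\xi)|^2 d\xi.
\end{equation*}
The integrand is non-negative with an a.e.\ positive weight, and continuity of $\hat\rho$ forces the integral to vanish iff $\hat\rho \equiv 0$, i.e.\ iff $\rho = 0$. So for $\rho \ne 0$ the sign of $F_\alpha(\rho)$ coincides with that of $C(\alpha)$. Because $\Gamma((\alpha+n)/2)>0$, this sign is that of $\Gamma(-\alpha/2)$, and the recursion $\Gamma(x) = \Gamma(x+1)/x$ gives $\Gamma < 0$ on $(-1,0)$ and $\Gamma > 0$ on $(-2,-1)$. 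Therefore $C(\alpha) < 0$ on $(0,2)$, yielding strict concavity, and $C(\alpha) > 0$ on $(2,4)$, yielding strict convexity.

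The two boundary exponents are handled by direct expansion. For $\alpha = 2$, $|x-y|^2 = |x|^2 + |y|^2 - 2 x \cdot y$, and since $\int x\, d\mu = 0$ on $\cP_0$ the cross term integrates to zero, leaving $\iint |x-y|^2 d\mu d\mu = 2\int |x|^2 d\mu$, a linear functional of $\mu$. For $\alpha = 4$, expanding $|x-y|^4 = (|x|^2 + |y|^2 - 2 x \cdot y)^2$ and noting that the cubic cross terms factor as $\int x\,|x|^2 d\mu \cdot \int y\, d\mu$ and hence vanish on $\cP_0$, we obtain a linear part plus the manifestly convex quadratic functional
\begin{equation*}
\mu \mapsto 4\sum_{i,j} \Big(\int x_i x_j\, d\mu\Big)^{\!2} + 2\Big(\int |x|^2 d\mu\Big)^{\!2}
\end{equation*}
in the second moments of $\mu$; this proves convexity at $\alpha = 4$.

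The main obstacle is upgrading non-negativity to strictness on the open ranges — one must rule out non-trivial $\rho$ with $F_\alpha(\rho) = 0$. This is handled by continuity of $\hat\rho$ together with positivity of the Fourier weight $|\xi|^{-\alpha-n}$, which together force $\hat\rho \equiv 0$ and hence $\rho = 0$ whenever the Fourier integral vanishes. The remaining ingredients — the quadratic expansion along segments, the sign analysis of the Gamma ratio via the recursion $\Gamma(x) = \Gamma(x+1)/x$, and the endpoint computations — are routine.
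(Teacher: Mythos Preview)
Your proof is correct and follows essentially the same approach as the paper: reduce (strict) convexity along segments to the sign of $F_\alpha(\rho)$, invoke Proposition~\ref{P:Lopes Formula} on the open ranges, and use continuity of $\hat\rho$ plus injectivity of the Fourier transform to get strictness. The only noteworthy difference is at the endpoint $\alpha=4$: the paper obtains convexity by passing to the limit $\mathcal{E}_{W_4}=\lim_{\alpha\nearrow 4}\mathcal{E}_{W_\alpha}$ of strictly convex functionals, whereas you expand $|x-y|^4$ explicitly and exhibit the energy as a linear term plus a sum of squares of second moments; your argument is slightly more informative (it identifies the degenerate directions), while the paper's is shorter.
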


\begin{proof} Let $\mu_0,\mu_1\in \cP_0(\R^n)\cap\cP_c(\R^n)$, $\rho= \mu_1-\mu_0$ and let $a(t)=\mathcal{E}_{W_\alpha}((1-t)\mu_0+t\mu_1)$ denote the energy along the line segment between $\mu_0$ and $\mu_1$. Given that we will be interested in questions of convexity, we note that $a''(t)=\mathcal{E}_{W_\alpha}(\rho)$, so convexity of $\mathcal{E}_{W_\alpha}$ depends exclusively on the sign of $F_\al(\rho) = 2\al \mathcal{E}_{W_\alpha}(\rho) $. 

We first address the $\alpha\in(0,2)$ and the $\alpha\in (2,4)$ cases. In either of these cases, we apply the formula from Proposition \ref{P:Lopes Formula} to see that 
\[
F_\al(\rho)=C(\alpha)\int_{\R^n}|\xi|^{-n-\alpha}|\hat\rho(\xi)|^2 d\xi.
\]
Since $|\xi|^{-n-\alpha}$ is strictly positive on $\R^n\setminus\{0\}$, this integral vanishes if  and only if $\hat\rho=0$ on $\R^n$ (recall $\hat\rho$ is continuous). However, by the injectivity of the Fourier-Stieltjes transform, this only happens if $\mu_0=\mu_1$, so we can conclude that, unless $\mu_0=\mu_1$, $\int |\xi|^{-n-\alpha}|\hat\rho(\xi)|^2d\xi>0$, and hence, $F(\rho)$ will take the sign of $C(\alpha)$ if $\mu_0\neq \mu_1$. Now $C(\alpha)<0$ for $\alpha\in(0,2)$ and $C(\alpha)>0$ for $\alpha\in (2,4)$
according to \eqref{Euler product}. This yields strict concavity in the former case and strict convexity in the latter. 

If $\al=2$, it is easily seen $\mathcal{E}_{W_2}(\mu) = \frac12 \int |x|^2 d\mu(x)$ 
hence depends linearly instead of quadratically on $\mu \in \cP_0(\Rn)$, while $\mathcal{E}_{W_4}=\lim_{\al \nearrow 4} \mathcal{E}_{W_\al}$
implies (not necessarily strict) convexity of $\mathcal{E}_{W_4}$.
\end{proof}

\begin{corollary}[{Spherical symmetry of $d_\infty$-local} energy minimizers]
\label{radialmin}
If $(\al,\bt) \in [2,4] \times (0,2]\setminus \{(4,2),(2,2)\}$ {and $\mu$ minimizes $\mathcal{E}_{\Wab}$ on an open $d_\infty$-ball in $\cP_c(\Rn)$ then, after translation, $\mu$} is spherically symmetric {if $n \ge2$}. {Apart from translations, $\mathcal{E}_{\Wab}$ has a unique global mininimum on $\cP(\Rn)$ for all $n \ge 1$.}  
\end{corollary}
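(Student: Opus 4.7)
The strategy is to first establish strict convexity of $\mathcal{E}_{\Wab}$ on $\cP_0(\R^n) \cap \cP_c(\R^n)$ throughout the stated parameter range, and then deploy this against (i) rotations, to force spherical symmetry of local minimizers when $n \ge 2$, and (ii) line segments between candidate distinct global minimizers, to force uniqueness up to translation.

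The first step combines Corollary \ref{C:strictconvexity}, applied separately to $\mathcal{E}_{W_\al}$ and $-\mathcal{E}_{W_\bt}$, to conclude strict convexity of $\mathcal{E}_{\Wab} = \mathcal{E}_{W_\al} - \mathcal{E}_{W_\bt}$ on $\cP_0 \cap \cP_c$ for each $(\al,\bt)$ in the stated range. When $\bt \in (0,2)$, $-\mathcal{E}_{W_\bt}$ is strictly convex while $\mathcal{E}_{W_\al}$ is at worst linear (at $\al=2$) or merely convex (at $\al=4$), so their sum is strictly convex. When $\bt=2$, $-\mathcal{E}_{W_\bt}$ reduces to a linear functional on $\cP_0$, so strict convexity of the sum must come from $\mathcal{E}_{W_\al}$ itself, requiring $\al \in (2,4)$; the endpoints $\al \in \{2,4\}$ correspond exactly to the excluded pairs $(2,2)$ and $(4,2)$.

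For the symmetry claim with $n \ge 2$, let $\mu$ be a $d_\infty$-local minimizer and translate so that $\mu \in \cP_0 \cap \cP_c$. For any $R \in SO(n)$, $R_\#\mu$ again lies in $\cP_0 \cap \cP_c$ and satisfies $\mathcal{E}_{\Wab}(R_\#\mu) = \mathcal{E}_{\Wab}(\mu)$ by rotational invariance of the kernel. The coupling $(X,RX)$ yields $d_\infty(\mu, R_\#\mu) \le \diam(\spt\mu) \cdot \|R - I\|$, and a mixture-coupling argument---randomizing between $Z=X$ and $Z=RX$ with respective probabilities $1-t$ and $t$---delivers the in-ball estimate $d_\infty(\mu, (1-t)\mu + t R_\#\mu) \le d_\infty(\mu, R_\#\mu)$ for every $t \in [0,1]$. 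Hence for $R$ sufficiently near the identity, the entire segment $\mu_t := (1-t)\mu + t R_\#\mu$ lies in the $d_\infty$-ball of local minimality; strict convexity at $t=1/2$ then forces $R_\#\mu = \mu$. Since such rotations generate the connected group $SO(n)$, which for $n \ge 2$ acts transitively on each centered sphere, $\mu$ is spherically symmetric about the origin.

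For global uniqueness on $\cP(\R^n)$ in every dimension, Proposition \ref{exist} forces any two global minimizers $\mu_0, \mu_1$ to have compact support, and a translation lets us place both in $\cP_0 \cap \cP_c$. Strict convexity along $(1-t)\mu_0 + t\mu_1$ then yields $\mu_0 = \mu_1$, giving uniqueness up to translation. I expect the main subtlety to lie in confirming that the convex combinations used in the symmetry step truly remain inside the local-minimality $d_\infty$-ball, since $d_\infty$-balls need not themselves be convex; the mixture-coupling inequality above is precisely what circumvents this difficulty.
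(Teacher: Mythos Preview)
Your proof is correct and follows essentially the same approach as the paper: strict convexity of $\mathcal{E}_{W_{\al,\bt}}$ on $\cP_0 \cap \cP_c$ via Corollary~\ref{C:strictconvexity}, then comparing $\mu$ with small rotations $R_\#\mu$ at the midpoint to force $SO(n)$-invariance, and using Proposition~\ref{exist} plus strict convexity for global uniqueness. You are in fact more explicit than the paper on two points: the case split $\bt \in (0,2)$ versus $\bt = 2$ needed to extract strict convexity of the difference $\mathcal{E}_{W_\al} - \mathcal{E}_{W_\bt}$ from Corollary~\ref{C:strictconvexity} (the paper merely asserts this), and the mixture-coupling inequality ensuring the midpoint $\frac12(\mu + R_\#\mu)$ lies in the $d_\infty$-ball of local minimality (the paper asserts this too without justification). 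One minor quibble: your stated inequality $d_\infty(\mu,\mu_t) \le d_\infty(\mu,R_\#\mu)$ is cleanest to justify by mixing the \emph{optimal} $d_\infty$-coupling of $(\mu,R_\#\mu)$ with the identity coupling, rather than the specific coupling $(X,RX)$; alternatively, the direct bound $d_\infty(\mu,\mu_t) \le \sup_{x \in \spt\mu}|x - Rx|$ already suffices for your purpose.
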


\begin{proof} Corollary \ref{C:strictconvexity} shows $\mathcal{E}_{W_{\alpha,\beta}}$ is strictly convex, so a standard convexity argument shows that, if $\mu, \nu \in \cP_0(\R^n)\cap\cP_c(\R^n)$ are distinct measures with the same energy, then $\frac{\mu+\nu}{2}$ will have strictly lower interaction energy than either.  {Since Lemma \ref{exist} shows global energy minimizers have bounded support,  there can only be one such minimizer centered at the origin.  
On the other hand,  if $\mu$ minimizes $\cP_c(\Rn)$
on a $d_\infty$-ball of radius $\epsilon$,  we may translate it to have center of mass at the origin. Any slight rotation $\nu:=R\mu$ has the same energy as $\mu$. For a small enough rotation,  $\frac12 (\mu +\nu)$ lies within $d_\infty$ distance
$\epsilon$ of $\mu$ and has strictly lower energy --- producing a contradiction unless $\mu=R\mu$ {(or $n=1$, in which case the only small rotation is trivial)}.  Thus $\mu$ is invariant under all small (and hence large) rotations if $n\ge2$:} i.e.~$\mu$ has the desired spherical symmetry. 
\end{proof}
\end{appendices}

\end{document}